\newcommand{\executeiffilenewer}[3]{%
 \ifnum\pdfstrcmp{\pdffilemoddate{#1}}%
 {\pdffilemoddate{#2}}>0%
 {\immediate\write18{#3}}\fi%
}
\newcommand{%
 \executeiffilenewer{.svg}{.pdf}%
 {inkscape -z -D --file=.svg %
 --export-pdf=.pdf --export-latex}%
 \input{.pdf_tex}%
}[1]{%
 \executeiffilenewer{#1.svg}{#1.pdf}%
 {inkscape -z -D --file=#1.svg %
 --export-pdf=#1.pdf --export-latex}%
 \input{#1.pdf_tex}%
}
\theoremstyle{definition} 
 \newtheorem{definition}{Definition}[section]
 \newtheorem{remark}[definition]{Remark}
\theoremstyle{plain}      
 \newtheorem{proposition}[definition]{Proposition}
 \newtheorem{theorem}[definition]{Theorem}
 \newtheorem{corollary}[definition]{Corollary}
 \newtheorem{lemma}[definition]{Lemma}
\newtheorem*{theorem*}{Theorem}
\newcommand{\C}{\mathbb{C}}
\newcommand{\Q}{\mathbb{Q}}
\newcommand{\N}{\mathbb{N}}
\newcommand{\Z}{\mathbb{Z}}
\renewcommand{\S}{\mathcal{S}}
\renewcommand{\O}{\mathcal{O}}
\renewcommand{\o}{{\!\scriptscriptstyle\mathcal{O}}}
\newcommand{\Sred}{\mathcal{S}^r}
\DeclareMathOperator{\tr}{Tr}
\DeclareMathOperator{\mcg}{Mod}
\DeclareMathOperator{\mcgu}{\widetilde{Mod}}
\DeclareMathOperator{\SL}{SL}
\DeclareMathOperator{\GL}{GL}
\DeclareMathOperator{\PGL}{PGL}
\DeclareMathOperator{\en}{End}
\DeclareMathOperator{\im}{Im}
\DeclareMathOperator{\id}{Id}
\DeclareMathOperator{\coker}{Coker}
\title{Introduction to quantum representations of mapping class groups}
\author{Julien Marché}
\address{Sorbonne Universit\'e, IMJ-PRG, 75252 Paris c\'edex 05, France}
\email{julien.marche@imj-prg.fr}
\begin{document}
\date{}
\maketitle

\begin{abstract}
We provide an (almost) self-contained construction of the Witten-Reshetikhin-Turaev representations of the mapping class group. We describe its properties including its Hermitian structure, irreducibility and integrality (at prime level). The construction of these notes relies only on skein theory (Kauffman Bracket) and does not use surgery techniques. We hope that they will be accessible to non-specialists. 
\end{abstract}
\section{Introduction}
The aim of these notes is to give a short and self-contained construction of the quantum representations of the mapping class group of a surface. These representations were discovered in the early 90's as a byproduct of a more general structure called topological quantum field theory (TQFT) that will not be covered in these notes as will not be covered the various relations of the quantum representations with arithmetic groups, semi-classical analysis, non-abelian Hodge theory, etc. Although many constructions are available now, none is easily accessible to non-specialists: most of them use surgery techniques and a modular category based either on the Kauffman bracket skein module or on the representation theory of quantum groups. 
Here we provide a new construction based on old ideas. Formally, we will construct a finite dimensional projective representation of the mapping class group over the cyclotomic field of order $4r$. This corresponds to the so-called SU$_2$ TQFT of level $r-2$, first introduced by Witten and Reshetikhin-Turaev (\cite{w,rt}). Our approach is in the spirit of \cite{bhmv} and \cite{skeinroberts} in that it uses skein theory of the Kauffman bracket. It is simpler however in that it uses only basics properties of the Jones-Wenzl idempotents, and no surgery techniques.  The main tool is the notion of reduced skein module, studied independently by Roberts and Sikora, see \cite{sikora}. 

Let $K$ be the cyclotomic field of order $4r$. We define the reduced skein module of a 3-manifold $M$ - denoted by $\Sred(M)$ - as the $K$-vector space generated by banded links in $M$ modulo three relations, the first two are the usual Kauffman relations, the third one involves a more complicated linear combination of banded links encoded by the Jones-Wenzl idempotent $f_{r-1}$. These modules are compatible with gluing, meaning that if $M$ and $N$ are two three manifolds that we glue along a part of their boundary, the disjoint union operation induces a bilinear map $\Sred(M)\times\Sred(N)\to \Sred(M\cup N)$. 

When $M=\Sigma\times[0,1]$, the gluing operations endow $\Sred(M)$ - which we denote from now on by $\Sred(\Sigma)$ - with the structure of a $K$-algebra. The main result is the following:

\begin{theorem*}There exists $n\in \N$ such that the algebra $\Sred(\Sigma)$ is isomorphic to the matrix algebra $M_n(K)$. 
\end{theorem*}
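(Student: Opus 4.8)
The plan is to realize $\Sred(\Sigma)$ as the full endomorphism algebra of a natural module and then match dimensions. Fix a handlebody $H$ with $\partial H=\Sigma$ and put $V=\Sred(H)$. Gluing $\Sigma\times[0,1]$ onto $H$ along $\Sigma\times\{0\}$ and using the homeomorphism $(\Sigma\times[0,1])\cup_\Sigma H\cong H$ turns $V$ into a left module over the algebra $\Sred(\Sigma)$, and this module structure is exactly an algebra homomorphism $\rho\colon\Sred(\Sigma)\to\en_K(V)$. If $\rho$ is an isomorphism then $\Sred(\Sigma)\cong\en_K(V)\cong M_n(K)$ with $n=\dim_K V$; note that the splitting over $K$ is automatic once $\rho$ is onto $\en_K(V)$, so no separate Wedderburn argument is needed. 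The whole problem thus reduces to proving that $\rho$ is bijective.

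First I would establish finite dimensionality and pin down both dimensions. A spanning set of $V=\Sred(H)$ is given by banded trivalent graphs carried by a spine of $H$, colored by the colorings the $f_{r-1}$ relation declares admissible; let $n$ be their number. For the algebra, a spanning set of $\Sred(\Sigma)$ comes from colored banded graphs in $\Sigma\times[0,1]$, which by a normal-form argument (pushing strands into a bicollar and resolving crossings through the Kauffman relations) reduce to a coloring near $\Sigma\times\{0\}$ paired with a coloring near $\Sigma\times\{1\}$. This should give $\dim_K\Sred(\Sigma)=n^2=\dim_K\en_K(V)$. With equality of dimensions in hand, it suffices to prove that $\rho$ is surjective.

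For surjectivity I would exhibit matrix units explicitly. Given two admissible colorings indexed by $i$ and $j$, consider the element of $\Sred(\Sigma)$ obtained by placing the colored graph for $j$ near the incoming boundary and the graph for $i$ near the outgoing boundary, joined by the appropriate cabling along the core of $\Sigma\times[0,1]$. I claim this acts on $V$ by sending the basis vector indexed by $j$ to the one indexed by $i$ and annihilating the rest, i.e. as the matrix unit $E_{ij}$. Verifying this reduces to computing the pairing of two colored spines glued inside a closed piece, a quantity governed by the theta-symbols and the quantum $6j$-symbols (tetrahedral coefficients) attached to the Jones--Wenzl idempotents; once these evaluate to invertible scalars the $E_{ij}$ are genuine matrix units and span $\en_K(V)$.

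The crux, and where I expect the real work to lie, is the nondegeneracy of this pairing, equivalently the statement that all theta-symbols and quantum dimensions arising from admissible colorings are nonzero units of $K$. This is precisely what the reduced relation buys: the $f_{r-1}$ relation restricts attention to the admissible colorings, whose bounds $a+b+c\le 2(r-1)$ keep every theta and tetrahedral coefficient within the range where the product formulas in terms of the quantum integers $[k]$ and the primitive $4r$-th root of unity do not vanish. Establishing this nonvanishing is the heart of the matter; once it is done, $\rho$ is a surjection between spaces of equal dimension, hence an isomorphism $\Sred(\Sigma)\cong\en_K(V)\cong M_n(K)$.
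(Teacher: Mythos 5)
Your overall frame agrees with the paper: both reduce the theorem to proving that the gluing action $\Phi:\Sred(\Sigma)\to\en(\Sred(H))$ is bijective, with $\dim\Sred(H)=n$ counted by $r$-admissible colorings. But the two steps where you locate the work are both gapped, and the crux is not where you say it is. First, the upper bound $\dim\Sred(\Sigma)\le n^2$: there is no ``normal form'' putting a skein in $\Sigma\times[0,1]$ into a coloring near $\Sigma\times\{0\}$ paired with one near $\Sigma\times\{1\}$. By the structure theorem, $\S(\Sigma\times[0,1])$ is freely spanned by multicurves sitting in a single middle leaf, and the $f_{r-1}$-relation alone does not reorganize these into such pairs. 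The paper gets the bound by writing $\Sigma\times[0,1]$ as a genus-$2g$ handlebody $H_1=(\Sigma\setminus D)\times[0,1]$ embedded in $S^3$, observing that $\Sred(H_1)\twoheadrightarrow\Sred(\Sigma)$, and controlling the kernel: the operator $T_1$ of encircling by $\partial D$ pushed into $H_1$ is adjoint, for the linking form with the complementary handlebody $H_2$, to an operator $T_2$ that is diagonal on colored graphs; the kernel of $T_2+(A^2+A^{-2})\id$ is spanned by colorings with $c(e)=0$ on the edge dual to $D$, and these biject with pairs of colorings of spines of $H$ and $H'$. Already this step uses non-degeneracy of the linking form.

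Second, your matrix units do not act as claimed. If you stack the colored spine for $j$ near the incoming boundary and act on $\Gamma_j\in\Sred(H)$, the gluing lands back in $\Sred(H)$, not in $K$, and the product of two colored spines inside $H$ is governed by fusion rather than orthogonality: already for the solid torus, $S_i(z)\cdot S_j(z)$ is a sum of basis vectors with Verlinde multiplicities, not $\delta_{ij}$ times anything. The orthogonality $\langle\Gamma_c,\Gamma_{c'}\rangle=\delta_{cc'}\langle\Gamma_c,\Gamma_c\rangle$ holds only in the double $H\cup\overline{H}$, where one compresses through essential spheres. To ``detect'' a basis vector you need elements dual to the $\Gamma_c$, and the paper produces these from the non-degeneracy of the linking pairing $\Sred(H)\times\Sred(H')\to\Sred(S^3)\simeq K$, which rests on the invertibility of the Hopf-link matrix $\Pi_{ij}=(-1)^{i+j}[(i+1)(j+1)]$, with $\Pi^2=\frac{-2r}{(A^2-A^{-2})^2}\id$. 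That invertibility --- not the non-vanishing of theta symbols and quantum dimensions, which only yields linear independence of the $\Gamma_c$ --- is the genuinely global input tied to $A$ being a primitive $4r$-th root of unity, and it is absent from your argument.
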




This gives a construction of the quantum representation by the following argument. Let $\mcg(\Sigma)$ be the mapping class group of $\Sigma$. Any element in $\mcg(\Sigma)$ is represented by a diffeomorphism $f$ which acts on $\Sigma\times[0,1]$ by $(x,t)\mapsto(f(x),t)$ and hence on $\Sred(\Sigma)$ by algebra automorphism. By the Skolem-Noether theorem, this automorphism, viewed as an automorphism of $M_n(K)$ is a conjugation by some uniquely defined element $\rho(f) \in\PGL_n(K)$. The mapping $f\mapsto \rho(f)$ is the aforementioned quantum projective representation: $$\rho:\mcg(\Sigma)\to \PGL_n(K).$$

To go further, consider the diffeomorphism of $\Sigma\times[0,1]$ given by $(x,t)\mapsto (x,1-t)$. It is an involution reversing the orientation and commuting with the action of $\mcg(\Sigma)$. This induces an anti-involution of $\Sred(\Sigma)$ which is antilinear with respect to the involution of $K$ mapping $A$ to $A^{-1}$. It is well-known that such involutions in matrix algebras correspond to Hermitian forms, see for instance \cite{involution}, p.1. Hence, we find that there exists a Hermitian form $h$ on $K^n$ such that $\rho$ takes values in $PU(h)$.

We will prove the main theorem by observing that if $\Sigma$ is the boundary of a handlebody $H$, then $\Sred(\Sigma)$ acts on $\Sred(H)$ by gluing $\Sigma\times[0,1]$ to the boundary of $H$. This defines a morphism of algebras $\Sred(\Sigma)\to \en(\Sred(H))$ and the theorem follows once we prove that it is an isomorphism and that $\Sred(H)$ is finite dimensional. In particular the latter space is naturally the space on which the quantum representation acts projectively. Given a banded trivalent graph $\Gamma$ embedded in $H$ such that $H$ retracts on $\Gamma$, we will construct elements $\Gamma_c\in \Sred(H)$ for every map $c:E(\Gamma)\to \{0,\ldots,r-2\}$ which is $r$-admissible, that is which satisfies the following relations for every triple of edges $e,e',e''\in E(\Gamma)$ incident to a vertex:

$$c(e)+c(e')+c(e'')\in \{0,2,\ldots, 2r-4\},\quad c(e)\le c(e')+c(e'').$$

We will show that these elements form an orthogonal basis of $\Sred(H)$ and compute their norm. This will give the dimension of the representation (known as the Verlinde formula) and the signature of the Hermitian form given an embedding of $K$ in $\C$. 

We will end the notes with some explicit formulas for the quantum representations and a proof that these representations are irreducible when $r$ is an odd prime. We will see that in the latter case, the reduced skein modules $\Sred(M)$ can be defined over the ring of integers $\O\subset K$. Denoting by $\Sred_\o(M)$ this integral version, we find that the mapping class group $\mcg(\Sigma)$ preserves $\Sred_\o(\Sigma)\subset\Sred(\Sigma)$. The space $\Sred_\o(\Sigma)$ is an {\it order}, meaning that it is both a sub-algebra and a finitely generated sub-$\O$-module which generates $\Sred(\Sigma)$ over $K$. This implies that the quantum representation stabilizes a $\O$-lattice in $\Sred(H)$. In particular, its image lies in an arithmetic group.
We then give indications on how one can introduce marked points in this settings.

{\bf Acknowledgments:} I would like to thank B. Deroin, P. Eyssidieux, B. Klingler, M. Maculan, G. Masbaum (in particular), A. Sambarino, N. Tholozan and M. Wolff for their encouragements and/or help during the writing of these notes.

\section{Skein Modules}
\subsection{Definition}
Let $M$ be a compact oriented 3-manifold with boundary and $P$ be a collection of disjoint oriented arcs embedded in $\partial M$ ($P$ and $\partial M$ may be empty).
\begin{definition}
A banded trivalent graph in $M$ with boundary $P$ is a (possibly empty) pair $(\Gamma,S_\Gamma)$ where 
\begin{itemize}
\item[-] $S_\Gamma$ is an oriented surface with boundary embedded in $M$ such that $S_\Gamma\cap \partial M=P$. We assume that the intersection is transversal and that the orientations of $\partial S_\Gamma$ and $P$ coïncide at the intersection.
\item[-] $\Gamma$ is a graph with vertices of valency 1 or 3. The set of univalent vertices is denoted by $\partial \Gamma$. $\Gamma$ can contain circles, loops and multiple edges. 
\item[-] There is an embedding $\Gamma\subset S_\Gamma$ such that $\Gamma\cap \partial S_\Gamma=\partial \Gamma \subset P$ and $S_\Gamma$ retracts on $\Gamma$ by deformation preserving $P$. 
\end{itemize}
\end{definition}

\begin{figure}[htbp]
\begin{center}
 \def\svgwidth{8cm}
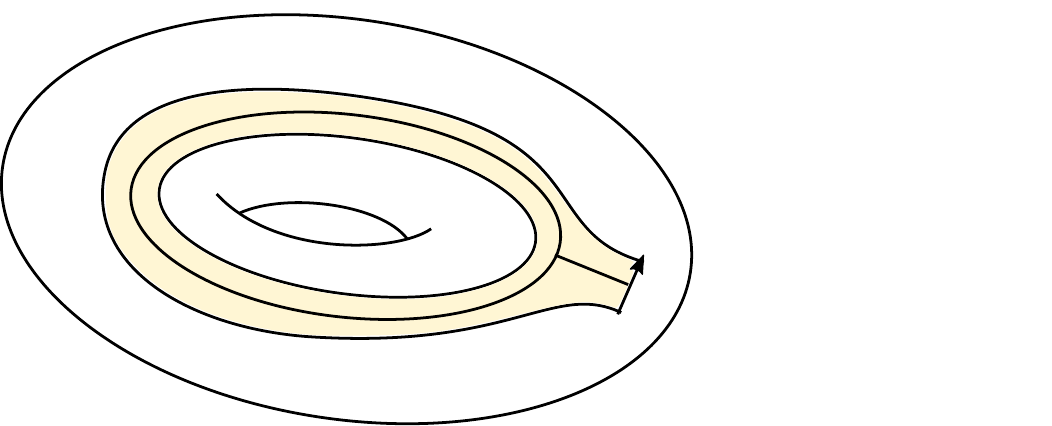
\caption{An example of banded trivalent graph}\label{graph}
\end{center}
\end{figure}

In the sequel we will remove $S_\Gamma$ from the notation although it will be always present. Also, in the next figures, we only show $\Gamma$. The surface will be understood to be a tubular neighbourhood of the graph in the plane were it is drawn. In particular, the system of arcs $P$ will appear as a system of points.
If $\Gamma$ has no trivalent vertices we will say that $\Gamma$ is a banded tangle and denote it preferably by $L$. If $\Gamma$ has no vertices at all (hence $P=\emptyset$ and $\Gamma$ is a union of circles), $\Gamma$ will be called a banded link.

Let $R$ be a commutative ring with unit and $A\in R^\times$ be an invertible element. We will denote by $\hat{\S}(M)$ the free $R$-module generated by isotopy classes of banded tangles with boundary $P$. This $R$-module comes with a simple gluing operation described below.

Let $M$ and $N$ be two 3-manifolds with $P_M\subset \partial M$ and $P_N\subset \partial N$ two system of arcs.  Let $\Sigma$ be a surface with two embeddings $i_M:\Sigma\to \partial M$ and $i_N:\Sigma\to \partial N$ respectively preserving and reversing the orientation. We suppose that the arcs are compatible in the sense that $i_M^{-1}(P_M)=i_N^{-1}(P_N)$. Then, mapping $(\Gamma,S_\Gamma),(\Gamma',S_{\Gamma'})$ to $(\Gamma\cup\Gamma',S_\Gamma\cup S_{\Gamma'})$ induces a $R$-bilinear map

\begin{equation}\label{gluing}
\hat{\S}(M,P_M)\times \hat{\S}(N,P_N)\to \hat{\S}(M\cup_\Sigma N,P_M'\cup P_N')
\end{equation}
where $P_M'$ and $P_N'$ denote the remaining system of arcs. We will denote by $\langle \cdot,\cdot\rangle_{M,N}$ this map. 

Let $M$ be a 3-manifold and $P\subset \partial M$ be a system of arcs. We will consider two kinds of elements of $\hat{\S}(M,P)$.
\begin{itemize}
\item[-] For the first kind, we consider a ball $B^3$ embedded in $M$ with $Q\subset \partial B^3$ a system of four arcs and we let $L_\times,L_0,L_\infty\in \hat{\S}(B^3,Q)$ be the elements showed in the left of Figure \ref{Kauffman}. Then for any $x\in \hat{\S}(M\setminus B^3,Q)$ we consider the linear combination 
$$\langle L_\times-AL_0-A^{-1}L_\infty,x\rangle_{B^3,M\setminus B^3}\in \hat{\S}(M,P)$$

\item[-] For the second kind, we consider a ball $B^3$ embedded in $M$ with no arcs and the banded links $L_U$ and $L_\emptyset$ showed in the right of Figure \ref{Kauffman}. For any $x\in \hat{\S}(M\setminus B^3,P)$ we consider the linear combination

$$\langle L_U+(A^2+A^{-2})L_\emptyset,x\rangle_{B^3,M\setminus B^3}\in \hat{\S}(M,P).$$
\end{itemize}

\begin{figure}[htbp]
\begin{center}
 \def\svgwidth{12cm}
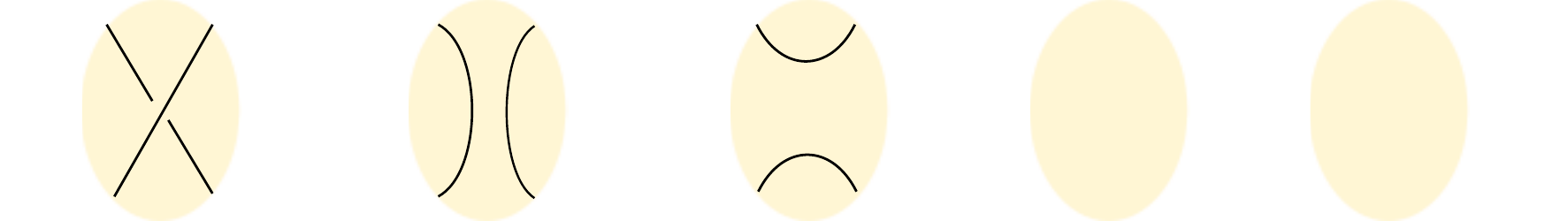
\caption{Kauffman relations}\label{Kauffman}
\end{center}
\end{figure}

\begin{definition}
The Kauffman skein module $\S(M,P)$ is by definition the quotient of $\hat{\S}(M,P)$ by the submodule generated by the above elements called Kauffman relations of the first and second kind respectively. 
\end{definition}

A key property of this definition is that all relations have the form $\langle r,x\rangle=0$ where $r$ is in a ball $B^3$ in the interior of $M$ and $x$ is in $M\setminus B^3$. If $M$ is glued to another manifold $N$, $B^3$ is still a ball in $M\cup N$ and $x$ may be viewed in $M\cup N\setminus B^3$: it follows that the gluing operations \eqref{gluing} are still well-defined and will be denoted in the same way. 
Skein modules were introduced independently by Przytycki and Turaev. We refer to \cite{prz} for details of the proof sketched in this section. 

\subsection{Skein module of thickened surfaces}

At this point, it is not clear whether the skein module is a manageable $R$-module. In this section, we will sketch the proof that when $M=\Sigma\times[0,1]$ for some compact oriented surface $\Sigma$, the module $\S(M)$ is free, generated by banded links embedded in $\Sigma\times\{1/2\}$. Here comes the precise statement. 

Let $\Sigma$ be a surface and $P\subset \partial \Sigma$ be a system of arcs. The pair $(\Sigma,P)$ will also denote the pair $\Sigma\times [0,1],P\times \{\frac 1 2\}$ to simplify the notation. A {\it simple tangle} $L$ is by definition a 1-manifold $L\subset \Sigma$ such that no component of $L$ bounds a disc embedded in $\Sigma$. The surface $S_L$ is a tubular neighbourhood of $L$ in $\Sigma$ such that $S_L\cap \partial \Sigma=P$. 

\begin{theorem}\label{structure_skein}
For any pair $(\Sigma,P)$ as above, the skein module $\S(\Sigma,P)$ is freely generated by isotopy classes of simple tangles. 
\end{theorem}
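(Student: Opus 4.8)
The plan is to establish separately that simple tangles \emph{span} $\S(\Sigma,P)$ and that they are \emph{linearly independent}, the latter being the substantial point. \textbf{Spanning.} First I would isotope an arbitrary banded tangle so that its projection to $\Sigma$ is a generic diagram $D$ carrying the blackboard framing inherited from $S_L$. Taking the number of double points of $D$ as a complexity, I resolve the crossings one at a time by the Kauffman relation of the first kind, writing $[L]=A[D_0]+A^{-1}[D_\infty]$ with $D_0,D_\infty$ having fewer crossings; by induction $[L]$ lands in the span of crossing-free diagrams. Such a diagram is an embedded $1$-manifold in $\Sigma$, and whenever a component bounds a disc I would delete it using the relation of the second kind at the cost of the scalar $\delta=-(A^2+A^{-2})$. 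Removing disc-bounding components innermost first terminates and leaves a simple tangle, so simple tangles span $\S(\Sigma,P)$.

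\textbf{Independence.} To build an inverse to the spanning surjection, I would define a map $\Phi$ from diagrams to the free $R$-module $F$ on isotopy classes of simple tangles as follows: resolve \emph{all} crossings by the first relation, obtaining an $R$-linear combination of embedded $1$-manifolds, then delete every disc-bounding component with its factor $\delta$. Two observations will make $\Phi$ well defined on isotopy classes of banded tangles. On the one hand, resolving the crossings in different orders yields the same $2^{c}$ terms with the same monomial coefficients, and deleting trivial circles in different orders multiplies by the same power of $\delta$, so $\Phi$ depends only on the diagram $D$. On the other hand, $\Phi$ is invariant under the Reidemeister moves II and III---this is precisely the classical statement that the Kauffman bracket is a regular-isotopy invariant. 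Since isotopy classes of banded tangles correspond, through the blackboard framing, to diagrams modulo moves II and III, $\Phi$ descends to isotopy classes and extends $R$-linearly to $\hat\S(\Sigma,P)$. By construction $\Phi$ annihilates both families of Kauffman relations, hence factors as a linear map $\bar\Phi:\S(\Sigma,P)\to F$; and since a simple tangle has neither crossings nor disc-bounding components, $\Phi$ fixes it, so $\bar\Phi$ followed by the spanning map is the identity on $F$. The spanning surjection is then injective, proving that $\S(\Sigma,P)$ is free on the isotopy classes of simple tangles.

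The crux is the invariance of $\Phi$ under Reidemeister moves II and III. Everything else---termination of the reductions, order-independence of the two expansions, and the fact that $\Phi$ kills the defining relations---is formal; the genuine geometric content lies in matching the two state-sum expansions on either side of a move II or III performed anywhere on $\Sigma$. This finite but delicate verification, together with the boundary bookkeeping for arcs ending on $P$, is the step for which the text refers to Przytycki \cite{prz}.
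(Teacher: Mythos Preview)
Your approach coincides with the paper's: both construct the state-sum map $\Phi$ from diagrams to the free module on simple tangles and reduce the theorem to showing that $\Phi$ is invariant under the relevant Reidemeister moves and annihilates the Kauffman relations; the paper even writes down the same explicit formula for $\Phi$ and likewise defers the detailed verification to \cite{prz}.

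There is, however, one small but genuine gap. You assert that isotopy classes of banded tangles correspond, via the blackboard framing, to diagrams modulo moves II and III. This is not correct: regular isotopy (RII $+$ RIII) also preserves the Whitney rotation number of each closed component of the projection, and that number is \emph{not} an invariant of the underlying banded tangle. The Reidemeister theorem for banded tangles that the paper quotes accordingly involves \emph{three} moves, the extra one being a framed version RI$'$ of the first Reidemeister move---for instance the insertion or deletion of a pair of adjacent kinks of opposite crossing sign, which keeps the writhe (hence the framing) fixed but changes the rotation number by $\pm 2$. Fortunately $\Phi$ is trivially invariant under RI$'$ as well: resolving the two kinks multiplies by $(-A^{3})(-A^{-3})=1$. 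Once you add this one-line check, your argument is complete and agrees with the paper's.
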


\begin{proof}
The proof is very instructive but too long to be given in detail. It relies on (a version of) the Reidemeister theorem which states that any banded tangle can be represented by a diagram in $\Sigma$, that is a submanifold $L\subset \Sigma$ with simple crossings and - at each crossing - the information of which branch goes above. By taking a tubular neighborhood of $L$ in $\Sigma$ and separating the branches at each crossing as indicated, one gets indeed a banded tangle. 
The second part of the Reidemeister theorem is that two banded tangles associated to the diagrams $L$ and $L'$ are isotopic if and only if they are related by a sequence of one of the three moves shown at the top of Figure \ref{reidemeister}. 

\begin{figure}[htbp]
\begin{center}
 \def\svgwidth{12cm}
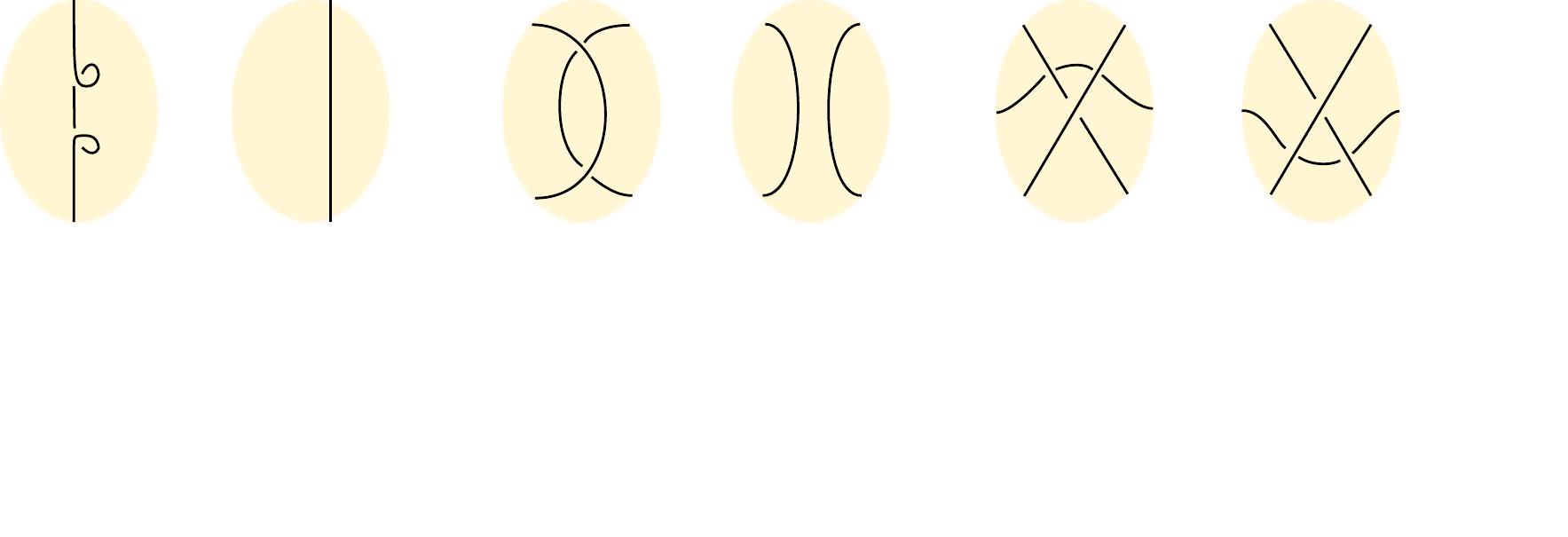
\caption{Reidemeister moves}\label{reidemeister}
\end{center}
\end{figure}

Consider then a diagram $L\subset \Sigma$ and a map $\xi:c(L)\to \{\pm 1\}$ where $c(L)$ denotes the set of crossings of $L$. We denote by $L_\xi$ the banded tangle obtained by smoothing the crossings of $L$ as showed at the bottom of Figure \ref{reidemeister}. We also denote by $n(L_\xi)$ the number of components of $L_\xi$ bounding a disc and by $L_\xi'$ the sub-tangle obtained by removing those trivial components. 
Then, the proof of the theorem consists in showing that the map $\Phi: \hat{S}(\Sigma,P)\to \bigoplus\limits_{L\text{ simple}} R L$ given by 
$$\Phi(L)=\sum_{\xi:c(L)\to\{\pm 1\}} A^{\sum_{c\in c(L)}\xi(c)}(-A^2-A^{-2})^{n(L)} L_\xi'$$
is well-defined, that is invariant by Reidemeister moves and sends a Kauffman relation to $0$. Both are simple to show, we refer to \cite{prz} for details.  
\end{proof}

We end this section by the following observation: suppose that $P=\emptyset$ and take $L_1,L_2$ two banded links in $\Sigma\times [0,1]$. By shrinking $[0,1]$ into $[0,1/2]$ for $L_1$ and into $[1/2,1]$ for $L_2$, one can consider their disjoint union $L_1\cup L_2\in \Sigma\times[0,1]$. This is again a gluing operation which induces an structure of associative algebra on $\S(\Sigma)$. Its unit is given by the empty link.

\subsection{Jones-Wenzl idempotents}\label{sectionjw}
For any $n\in \N$, we fix a standard collection $P_n$ of arcs of cardinality $n$ in $(0,1)\times\{1/2\}$ and set $$T_n=\S([0,1]^3, P_n\times\{0,1\}).$$
If $x,y\in T_n$, we can view $x$ in $\S([0,1]^2\times[0,1/2],P_n\times\{0,1/2\})$ and $y$ in $\S([0,1]^2\times[1/2,1],P_n\times\{1/2,1\})$.
Gluing them endows $T_n$ with the structure of an algebra called the Temperley-Lieb algebra. Its unit is the class of $P_n\times [0,1]$ that we denote by $1_n$. 
We further define $e_i\in T_n$ for $i=1,\ldots, n-1$ as in Figure \ref{e_i}. 

\begin{figure}[htbp]
\begin{center}
 \def\svgwidth{10cm}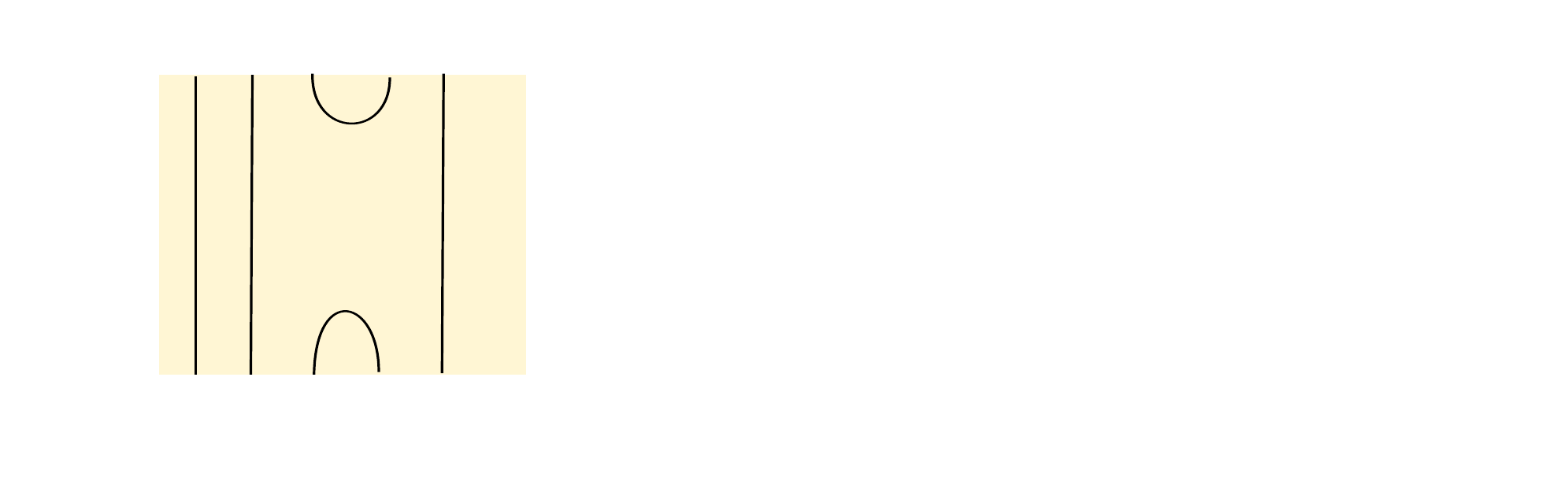
\caption{Generators and trace of the Temperley-Lieb algebra}\label{e_i}
\end{center}
\end{figure}

Let $I_n\subset T_n$ be the submodule generated by tangles which intersect (up to isotopy) the middle square $[0,1]^2\times\{1/2\}$ in strictly less than $n$ points. It is clearly a two-sided ideal containing the $e_i$. 

Finally, we observe that the unique simple tangle in $T_n$ which is not in $I_n$ is the trivial one. Hence, $T_n/I_n\simeq R 1_n$ and we denote by $\epsilon:T_n\to R$ the unique map such that $x=\epsilon(x)1_n\mod I_n$. The Jones-Wenzl idempotents appear when one tries to decompose $T_n$ as a product algebra $I_n\times R$. 

To that aim, we observe that the juxtaposition of tangles gives a morphism $T_n\otimes T_m\to T_{n+m}$. We will identify $x\in T_n$ with $x\otimes 1_1\in T_{n+1}$ . It consists in adding a trivial strand on the right of a diagram. We also define a trace $\tr:T_n\to R$ by ``closing" the tangle in a standard way in $B^3$ as in the right hand side of Figure \ref{e_i} (by identifying $\S(B^3)$ with $R$). We also set $[n]=\frac{A^{2n}-A^{-2n}}{A^2-A^{-2}}$.

\begin{theorem}\label{jw}
Let $r\in \N$ be such that $[1],\ldots, [r-1]$ are invertible in $R$ and set $f_0=1_0$ and for all $0<n<r$ (setting $e_{-1}=0$)
$$f_{n}=f_{n-1}+\frac{[n-1]}{[n]}f_{n-1} e_{n-1} f_{n-1} \in T_{n}$$
These elements satisfy the following properties:
\begin{enumerate}
\item $f_n^2=f_n$.
\item $f_n e_i=e_if_n=0$ for all $i<n$. 
\item $f_n-1_n\in I_n$.
\item $\tr f_n=(-1)^n[n+1]$.
\end{enumerate}
\end{theorem}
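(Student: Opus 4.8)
The plan is to prove the four properties simultaneously by strong induction on $n$. The whole computation runs inside the Temperley-Lieb algebra and uses only two elementary inputs: the relations among the generators that follow at once from the Kauffman relations and planar isotopy, namely $e_i^2=-(A^2+A^{-2})e_i=-[2]e_i$, $\ e_ie_{i\pm1}e_i=e_i$ and $e_ie_j=e_je_i$ for $|i-j|\ge 2$; and the quantum-integer identity $[2][m]=[m-1]+[m+1]$, immediate from the definition of $[m]$. Throughout, $f_{m}$ is viewed inside $T_n$ by adding trivial strands, so that $f_{m}$ commutes with every $e_j$ with $j\ge m+1$. The base cases $n=0,1$ are direct: $f_0=1_0$ and $f_1=1_1$ satisfy (1)--(3) trivially, while $\tr f_0=1=[1]$ and $\tr f_1=-(A^2+A^{-2})=-[2]$, matching (4).

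Before treating $f_n$ I would record two absorption facts obtained directly from the recursion and the inductive idempotency $f_{n-2}^2=f_{n-2}$: first $f_{n-1}f_{n-2}=f_{n-2}f_{n-1}=f_{n-1}$ (multiply the recursion defining $f_{n-1}$ by $f_{n-2}$), and likewise $f_nf_{n-1}=f_{n-1}f_n=f_n$. The main step --- and the only one that is not pure bookkeeping --- is the \emph{bubble identity}
\[
f_{n-1}e_{n-1}f_{n-1}e_{n-1}=-\frac{[n]}{[n-1]}\,f_{n-1}e_{n-1}.
\]
To prove it I would expand the inner $f_{n-1}$ through its own recursion $f_{n-1}=f_{n-2}+\frac{[n-2]}{[n-1]}f_{n-2}e_{n-2}f_{n-2}$. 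In the first resulting term, $f_{n-2}$ commutes with $e_{n-1}$, so $e_{n-1}f_{n-2}e_{n-1}=-[2]f_{n-2}e_{n-1}$; in the second term the same commutation together with $e_{n-1}e_{n-2}e_{n-1}=e_{n-1}$ and $f_{n-2}^2=f_{n-2}$ collapses the expression to $f_{n-2}e_{n-1}$. Using $f_{n-1}f_{n-2}=f_{n-1}$ to clean up the outer factors, the coefficient becomes $-[2]+\frac{[n-2]}{[n-1]}=\frac{-[2][n-1]+[n-2]}{[n-1]}=-\frac{[n]}{[n-1]}$, the last equality being exactly the quantum-integer identity.

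With this in hand the remaining properties fall out quickly in the order (2), (1), (3), (4). For (2), write $f_ne_i$ using the recursion: if $i\le n-2$ both summands die because $f_{n-1}e_i=0$ by induction, while for $i=n-1$ the bubble identity gives $f_ne_{n-1}=\big(1+\tfrac{[n-1]}{[n]}(-\tfrac{[n]}{[n-1]})\big)f_{n-1}e_{n-1}=0$, the recursion coefficient having been designed precisely to force this cancellation; the identity $e_if_n=0$ is symmetric. Property (1) then follows from $f_n^2=f_nf_{n-1}+\frac{[n-1]}{[n]}f_n f_{n-1}e_{n-1}f_{n-1}=f_n+\frac{[n-1]}{[n]}f_ne_{n-1}f_{n-1}=f_n$, where $f_nf_{n-1}=f_n$ and $f_ne_{n-1}=0$. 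Property (3) is immediate since $f_{n-1}-1_n\in I_n$ by induction and $f_{n-1}e_{n-1}f_{n-1}\in I_n$ because $e_{n-1}\in I_n$ and $I_n$ is an ideal.

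Finally, for (4) I would apply $\tr$ to the recursion. Closing the extra trivial strand shows $\tr(f_{n-1})=-[2]\,\tr_{n-1}(f_{n-1})=(-1)^n[2][n]$ by induction, while closing the last strand of $e_{n-1}$ (which turns $e_{n-1}$ into the identity on $n-1$ strands), together with $\tr(f_{n-1}e_{n-1}f_{n-1})=\tr(f_{n-1}e_{n-1})$, gives $\tr(f_{n-1}e_{n-1}f_{n-1})=\tr_{n-1}(f_{n-1})=(-1)^{n-1}[n]$. Substituting, $\tr f_n=(-1)^n[2][n]+\frac{[n-1]}{[n]}(-1)^{n-1}[n]=(-1)^n([2][n]-[n-1])=(-1)^n[n+1]$, again by the quantum-integer identity. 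The only genuine obstacle is the bubble identity: it is where the Temperley-Lieb relation $e_{n-1}e_{n-2}e_{n-1}=e_{n-1}$ and the Chebyshev recursion for $[m]$ enter, and it is what pins down the coefficient $\frac{[n-1]}{[n]}$ in the definition of $f_n$; the rest is careful but routine manipulation, with the trace computation requiring only mild attention to the bookkeeping of the closed-off strand.
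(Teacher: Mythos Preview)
Your argument is correct. For items (1)--(3) it is exactly the standard inductive proof the paper defers to (Lickorish, Chap.~13): the bubble identity you isolate is the heart of that induction, and your derivation of it is clean. Where you genuinely diverge is item (4). The paper does not prove $\tr f_n=(-1)^n[n+1]$ by tracing the recursion; instead it passes through the skein algebra of the annulus, shows that the closure $\hat f_n$ equals the Chebyshev polynomial $S_n(z)$ in the core curve (Lemma~\ref{tchebychev}), and then evaluates $S_n(-A^2-A^{-2})$. Your route is more self-contained and avoids introducing the annulus and Chebyshev polynomials at this stage, at the small cost of a partial-trace bookkeeping step (the observation that closing the last strand of $e_{n-1}$ returns the identity on $n-1$ strands). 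The paper's route is slightly less direct here but sets up the identification $\hat f_n=S_n(z)$, which is reused later (e.g.\ in the Hopf-link computation of Proposition~\ref{linking}). Both are standard; yours is the quicker way to reach the trace formula in isolation.
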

We skip the proof of the three first items which follows easily by induction (see \cite{Lickorish}, Chap.13) and postpone the last item to the end of this section.

The main property of the idempotents is that $xf_n=f_nx=0$ whenever $x\in I_n$ since one can show that $I_n$ is generated by the $e_i$ as an algebra (without unit). This implies the following equation: $f_nx=xf_n=\epsilon(x)f_n$. In practice, $\epsilon(x)$ can be computed by solving the crossings of $x$ in all possible ways that avoid back-tracking. 
The following examples will be useful.

\begin{lemma}\label{cerclage}
Let $x_n, y_n,z_n$ be the elements of $T_n$ showed in Figure \ref{cercle}. Then $\epsilon(x_n)=-\frac{[n+2]}{[n+1]}$, $\epsilon(y_n)=-A^{2n+2}-A^{-2n-2}$ and $\epsilon(z_n)=(-1)^nA^{n(n+2)}$. 
\end{lemma}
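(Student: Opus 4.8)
The plan is to handle the three evaluations by one common device and then do the genuinely computational part case by case. Since $x_n,y_n,z_n$ all lie in $T_n$, the fundamental identity $f_nw=wf_n=\epsilon(w)f_n$ applies to each of them. Multiplying on one side by $f_n$, using $f_n^2=f_n$ (item (1)) and the cyclicity of the trace, I get for every $w\in T_n$
\[
\epsilon(w)=\frac{\tr(f_nw)}{\tr f_n}=\frac{\tr(f_nw)}{(-1)^n[n+1]},
\]
where the denominator is invertible in the range of $n$ we care about, by the standing hypothesis on $[1],\dots,[r-1]$. This reduces each $\epsilon$ to the Kauffman-bracket evaluation of a single closed (coloured) diagram obtained by closing up $f_nw$.

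For $x_n$, which is the partial closure of $f_{n+1}$ along its last strand, this device settles the matter immediately. Indeed $f_nf_{n+1}=f_{n+1}$ (clear from the recursion), and closing the last strand commutes with $f_n$ acting on the first $n$ strands, so $\tr(f_nx_n)=\tr f_{n+1}=(-1)^{n+1}[n+2]$ by item (4). Dividing by $\tr f_n$ gives $\epsilon(x_n)=-[n+2]/[n+1]$ with no further work; this case is really just a reformulation of item (4).

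For $y_n$, the meridian circle encircling the $n$ strands, closing $f_ny_n$ produces the Hopf link whose two components are coloured by $n$ (carrying $f_n$) and by $1$. I would evaluate this coloured Hopf link directly: resolve the two passes of the meridian across the cable by the Kauffman relation, and observe that every smoothing which creates a turn-back inserts some $e_i$ against $f_n$ and is therefore killed by item (2), so that only a controlled family of terms survives. The bookkeeping yields $\tr(f_ny_n)=(-1)^{n+1}[2n+2]$, and then
\[
\epsilon(y_n)=\frac{(-1)^{n+1}[2n+2]}{(-1)^n[n+1]}=-\frac{[2n+2]}{[n+1]}=-\bigl(A^{2n+2}+A^{-2n-2}\bigr),
\]
using $[2n+2]/[n+1]=A^{2n+2}+A^{-2n-2}$.

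For $z_n$, a positive curl on the $n$ strands, the trace device is circular — closing a kinked coloured unknot only returns $\epsilon(z_n)\tr f_n$ — so I would instead evaluate the curl locally and inductively. The base case $n=1$ is the Kauffman Reidemeister~I move, giving $-A^3$. For the inductive step I separate the last strand: in the curl it acquires one self-kink, contributing $-A^3$, and it crosses the bundle of the other $n-1$ strands twice; since $\sigma_i=A\,1_n+A^{-1}e_i$ and $f_ne_i=0$, each such crossing is absorbed with a factor $A$, for a total $A^{2(n-1)}$. Hence $\epsilon(z_n)=(-A^3)A^{2(n-1)}\epsilon(z_{n-1})$, and solving this recursion from $\epsilon(z_1)=-A^3$ gives $\epsilon(z_n)=(-1)^nA^{n(n+2)}$.

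The easy part is $x_n$; the real work — and the main obstacle — is the sign and framing bookkeeping in the last two cases: making sure every turn-back in the Hopf-link resolution is correctly matched against $f_n$, and that the crossings and the self-kink in the curl are assigned the right powers of $A$. Both become routine once the diagrams are set up, but they are error-prone, and this is where all the content of the lemma sits.
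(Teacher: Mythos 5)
Your three formulas are correct, and your induction for $z_n$ is sound --- indeed more self-contained than the paper, which simply cites \cite{Lickorish} for $y_n$ and $z_n$. The genuine gap is that the trace device you use for $x_n$ and $y_n$ is circular relative to the paper's logical architecture. You write $\epsilon(w)=\tr(f_nw)/\tr f_n$ and plug in item (4) of Theorem \ref{jw}, $\tr f_n=(-1)^n[n+1]$. But the paper explicitly postpones the proof of item (4) to the end of the section, and that postponed proof passes through Lemma \ref{tchebychev}, which is itself deduced from the value of $\epsilon(x_n)$ established in the present lemma: the dependency chain is $\epsilon(x_n)\Rightarrow\hat f_n=S_n(z)\Rightarrow\tr f_n=(-1)^n[n+1]$. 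Your identity $\tr(f_nx_n)=\tr f_{n+1}$, rewritten as $\tr f_{n+1}=\epsilon(x_n)\tr f_n$, is precisely the recursion by which the paper later derives item (4) \emph{from} $\epsilon(x_n)$, so you are running the implication backwards. The repair is to compute $\epsilon(x_n)$ without traces, directly from the recursion $f_{n+1}=f_n\otimes 1_1+\frac{[n]}{[n+1]}(f_n\otimes 1_1)e_n(f_n\otimes 1_1)$: partially closing the last strand sends the first term to $f_n$ together with a disjoint trivial circle, contributing $-[2]$, and the second term to $\frac{[n]}{[n+1]}f_n$ (the partial closure of $e_n$ is the identity strand), whence $\epsilon(x_n)=-[2]+\frac{[n]}{[n+1]}=-\frac{[n+2]}{[n+1]}$ by $[2][n+1]=[n]+[n+2]$. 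A secondary caveat about the same device: the standing hypothesis only makes $[1],\dots,[r-1]$ invertible, so $\tr f_n=(-1)^n[n+1]$ is guaranteed invertible only for $n\le r-2$ (over the cyclotomic field $\tr f_{r-1}=\pm[r]=0$); your division is therefore unavailable for $y_{r-1}$, whereas a direct inductive evaluation is not.

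For $y_n$ you assert that ``the bookkeeping yields $\tr(f_ny_n)=(-1)^{n+1}[2n+2]$'' without carrying it out, and you yourself identify that bookkeeping as the substance of the case; as written, this part is a claim rather than a proof. The paper is no more explicit there --- it defers to \cite{Lickorish}, Lemmas 14.1 and 14.2 --- so this is a matter of completeness rather than of correctness, but the case is not actually established by your text. The self-contained route is an induction of exactly the same flavour as your $z_n$ argument: resolve the crossings of the meridian with the last strand, kill every turn-back term against $f_n$ using $f_ne_i=0$, and derive a recursion for $\epsilon(y_n)$ with base $\epsilon(y_0)=-A^2-A^{-2}$; doing that would put $y_n$ on the same solid footing as your $z_n$ computation, which I find correct as it stands.
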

\begin{proof}
By considering the recursive definition of $f_{n+1}$, we get immediately $\epsilon(x_n)=-[2]+\frac{[n]}{[n+1]}$. From the formula $[n+1][m+1]=[n][m]+[n+m+1]$ we get the result. We refer to \cite{Lickorish}, Lemma 14.1 and Lemma 14.2 for the last two. 
\end{proof}

\begin{figure}[htbp]
\begin{center}
 \def\svgwidth{12cm}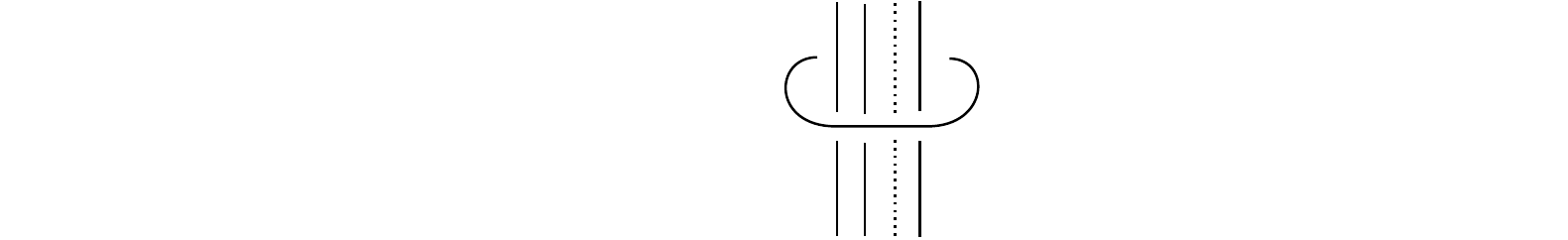
\caption{Partial trace, embracing and twisting}\label{cercle}
\end{center}
\end{figure}

Let $S$ be the annulus obtained by gluing the opposite sides of $[0,1]\times\{1/2\}\times [0,1]$. Thanks to Theorem \ref{structure_skein}, the algebra $\Sred(S)$ is isomorphic to $R[z]$ where $z$ denotes the core of the annulus $S$, formally $[1/3,2/3]\times\{1/2\}\times [0,1]$. 

Let us recall the definition of the Tchebychev polynomials of the second kind. They satisfy $S_0=1,S_1=z$ and $S_{n+1}=zS_n-S_{n-1}$ for all $n>0$ and the identity $S_n(-y-y^{-1})=(-1)^n\frac{y^{n+1}-y^{-n-1}}{y-y^{-1}}$.

\begin{lemma}\label{tchebychev}
Let $\hat{f}_n$ be the closure of the idempotent $f_n$ obtained by identifying the opposite sides of the cube. Then $\hat{f}_n=S_n(z)$.
\end{lemma}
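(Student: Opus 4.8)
The plan is to prove by induction that $\hat f_n=S_n(z)$, by checking that $n\mapsto\hat f_n$ satisfies the very recursion defining the polynomials $S_n$. Recall that $\S(S)\cong R[z]$, so it suffices to show $\hat f_0=1$, $\hat f_1=z$ and $\hat f_{n+1}=z\hat f_n-\hat f_{n-1}$. The base cases are immediate: $\hat f_0$ is the closure of the empty tangle, i.e. the unit $1$, and $\hat f_1$ is the closure of a single strand, i.e. one copy of the core $z$. I will use two elementary properties of the closure map $\widehat{\,\cdot\,}\colon T_m\to\S(S)=R[z]$. First, it is a trace, hence cyclic: $\widehat{xy}=\widehat{yx}$, obtained by sliding $x$ once around the annulus. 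Second, closing a diagram together with one extra parallel trivial strand multiplies the result by the core; in particular $\widehat{f_n\otimes 1_1}=z\,\hat f_n$.

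To produce the recursion I would close up the defining relation of Theorem \ref{jw}, read at level $n+1$: viewing $f_n=f_n\otimes 1_1$ and $e_n$ as elements of $T_{n+1}$,
\begin{equation*}
f_{n+1}=f_n\otimes 1_1+\frac{[n]}{[n+1]}\,f_ne_nf_n .
\end{equation*}
Applying $\widehat{\,\cdot\,}$ and using $\widehat{f_n\otimes 1_1}=z\hat f_n$, the whole statement reduces to the single identity
\begin{equation*}
\widehat{f_ne_nf_n}=-\frac{[n+1]}{[n]}\,\hat f_{n-1},
\end{equation*}
for then $\hat f_{n+1}=z\hat f_n+\frac{[n]}{[n+1]}\cdot\bigl(-\frac{[n+1]}{[n]}\bigr)\hat f_{n-1}=z\hat f_n-\hat f_{n-1}$.

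The core of the argument is thus this identity, which I would establish as follows. By cyclicity and $f_n^2=f_n$ one has $\widehat{f_ne_nf_n}=\widehat{f_n^2e_n}=\widehat{(f_n\otimes 1_1)e_n}$, so only one idempotent box survives. Now I interpret $e_n$ geometrically: its cup joins the $n$-th strand leaving $f_n$ to the auxiliary strand $n+1$, and its cap joins the two strands $n,n+1$ at the top. After closing the cube into the solid torus, the auxiliary strand $n+1$—which, on its own, would have closed to an extra core circle—is instead spliced by this cup–cap into the $n$-th strand of $f_n$. Because strands $n$ and $n+1$ are the two outermost strands and run parallel, this splice is a saddle joining two parallel longitudes with opposite effective orientations; the resulting arc capping the $n$-th strand of $f_n$ therefore has net winding zero around the annulus and is unlinked from the strands $1,\dots,n-1$. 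Consequently it can be isotoped to a small local turnback, which means precisely that
\begin{equation*}
\widehat{(f_n\otimes 1_1)e_n}=\widehat{\,\mathrm{ptr}_n(f_n)\,},
\end{equation*}
where $\mathrm{ptr}_n\colon T_n\to T_{n-1}$ is the partial trace closing only the last strand by a local arc, and the closure on the right is the annular closure of the remaining $n-1$ strands. Finally, $\mathrm{ptr}_n(f_n)$ is the element $x_{n-1}$ of Lemma \ref{cerclage}; since it lies in the corner $f_{n-1}T_{n-1}f_{n-1}=Rf_{n-1}$, it equals $\epsilon(x_{n-1})f_{n-1}=-\frac{[n+1]}{[n]}f_{n-1}$, and closing gives $-\frac{[n+1]}{[n]}\hat f_{n-1}$, as required.

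I expect the only real obstacle to be the geometric step: rigorously justifying that the arc produced by $e_n$ together with the closure of the auxiliary strand has net winding zero and is unlinked from the other strands, so that it retracts to the local partial trace rather than contributing a spurious factor of $z$ or a linking. This ``net winding zero'' bookkeeping is exactly what separates the annular closure from a naive full closure (for instance it is what turns the two would-be core circles into a single contractible circle when $n=1$, giving $\widehat{e_1}=-A^2-A^{-2}$), and it is the point at which one must argue carefully with the cyclic positions of the strands inside the solid torus. The remaining algebraic reduction and the evaluation through Lemma \ref{cerclage} are then routine.
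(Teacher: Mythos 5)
Your proof is correct and follows essentially the same route as the paper: close up the Jones--Wenzl recursion in the annulus, identify the closure of $f_ne_nf_n$ with the closure of the partial trace $x_{n-1}$ of Lemma \ref{cerclage}, and use $x_{n-1}=\epsilon(x_{n-1})f_{n-1}=-\frac{[n+1]}{[n]}f_{n-1}$ to recover the Tchebychev recursion. The paper compresses the geometric identification $\widehat{f_ne_nf_n}=\hat{x}_{n-1}$ into a single sentence, whereas you spell it out; the content is the same.
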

\begin{proof}
The recursion formula for $f_n$ implies $\hat{f}_{n+1}=z\hat{f}_n+\frac{[n]}{[n+1]} \hat{x}_{n-1}$ using the notation of Lemma \ref{cerclage}. From the equality $f_nf_{n+1}=f_{n+1}$ and Lemma \ref{cerclage}, we get $\hat{x}_{n-1}=-\frac{[n+1]}{[n]}\hat{f}_{n-1}$. This shows that $\hat{f}_n$ satisfies the same recursion relation as $S_n$. As $\hat{f}_0=1$ and $\hat{f}_1=z$, the conclusion follows.  
\end{proof}

This lemma proves the last item of Theorem \ref{jw} by the following observation. The standard embedding of $S$ in $B^3$ induces a map $i:\S(S)\to \S(B^3)=R$. As  $z^n$ is a collection of disjoint trivial circles, we have $i(z^n)=(-A^2-A^{-2})^n$. Hence $\tr(f_n)=i(\hat{f}_n)=S_n(-A^2-A^{-2})=(-1)^n[n+1]$. 

\section{The $r$-reduced skein module}

We fix $r\ge 2$ and suppose in this section that $R$ is equal to the cyclotomic field $K=\Q[A]/(\phi_{4r}(A))$ where $\phi_{4r}$ denotes the cyclotomic polynomial of order $4r$. This implies that $[n]\ne 0$ for all $n<r$ and $[r]=0$. In particular, only the idempotents $f_0,\ldots,f_{r-1}$ exist. The last one has the following crucial vanishing property:
\begin{lemma}\label{vanishing}
For all $x\in T_{r-1}$, $\tr (f_{r-1}x)=0$. 
\end{lemma}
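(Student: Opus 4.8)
The plan is to reduce the statement to two facts already established in the excerpt: the absorption property of the Jones--Wenzl idempotent and the trace computation in item (4) of Theorem \ref{jw}. The key observation is that multiplying by $f_{r-1}$ collapses an arbitrary element of $T_{r-1}$ onto a scalar multiple of $f_{r-1}$ itself, after which taking the trace is immediate.

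Concretely, I would first invoke the identity $f_{r-1}x = \epsilon(x)\,f_{r-1}$, valid for every $x\in T_{r-1}$. This is exactly the relation recorded just after Theorem \ref{jw}: by definition of $\epsilon$ we have $x-\epsilon(x)1_{r-1}\in I_{r-1}$, and $f_{r-1}$ annihilates $I_{r-1}$ on both sides, since $I_{r-1}$ is generated as a (non-unital) algebra by the $e_i$ and item (2) gives $f_{r-1}e_i=e_if_{r-1}=0$ for all $i<r-1$. Hence $f_{r-1}x = f_{r-1}\bigl(\epsilon(x)1_{r-1}\bigr) + f_{r-1}\bigl(x-\epsilon(x)1_{r-1}\bigr) = \epsilon(x)f_{r-1}$.

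With this in hand the computation is one line: by $R$-linearity of the trace, $\tr\bigl(f_{r-1}x\bigr) = \tr\bigl(\epsilon(x)f_{r-1}\bigr) = \epsilon(x)\,\tr f_{r-1}$. Now item (4) of Theorem \ref{jw}, applied with $n=r-1$, yields $\tr f_{r-1} = (-1)^{r-1}[r]$, and the running hypothesis that $R=K=\Q[A]/(\phi_{4r}(A))$ forces $[r]=0$, as noted at the start of this section. Therefore $\tr(f_{r-1}x)=\epsilon(x)\cdot 0 = 0$, which is the claim.

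I do not expect a genuine obstacle here: the lemma is essentially a corollary of the absorbing identity together with the vanishing $[r]=0$. The only points meriting care are purely bookkeeping ones, namely that $f_{r-1}$ is indeed defined (which holds because $[1],\dots,[r-1]$ are invertible in $K$) and that the structural claim ``$I_{r-1}$ is generated by the $e_i$'' underlying the absorption identity is used correctly. If one wished to avoid relying on the latter generation statement, an alternative would be to argue on a basis of Temperley--Lieb diagrams: every basis diagram $x\ne 1_{r-1}$ lies in $I_{r-1}$ and is killed by $f_{r-1}$, while the diagram $x=1_{r-1}$ gives $\tr f_{r-1}=(-1)^{r-1}[r]=0$ directly; linearity then concludes. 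Either route delivers the result with no essential difficulty.
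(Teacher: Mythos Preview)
Your argument is correct and is essentially the paper's own proof: the paper writes $x=\epsilon(x)1_{r-1}+y$ with $y\in I_{r-1}$, uses $f_{r-1}y=0$ (Property~2), and then $\tr f_{r-1}=(-1)^{r-1}[r]=0$ (Property~4), which is exactly your absorption-plus-trace computation packaged slightly differently.
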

\begin{proof}
One write $x=1_{r-1}+y$ with $y\in I_{r-1}$. By Property $2$ of Theorem \ref{jw}, we have $ f_{r-1}y=0$ and by Property $4$, $\tr f_{r-1}=(-1)^{r-1}[r]=0$, hence the result. 
\end{proof}

This suggests the following definition for any $3$-manifold $M$ with system of arcs $P$. 
\begin{definition} For any embedding of $[0,1]^3\to M$, and any $x\in \S(M\setminus [0,1]^3, P_{r-1}\times\{0,1\} \cup P)$ we call $(r-1)$-relation the following element: 
$$\langle f_{r-1},x\rangle_{[0,1]^3,M\setminus[0,1]^3}\in \S(M,P).$$

We define $\Sred(M,P)$ to be the quotient of $\S(M,P)$ by the $K$-subspace generated by the $(r-1)$-relations.
\end{definition}

As for the standard skein module, the relations being local, the reduced skein module is still compatible with gluing. Moreover, thanks to Lemma \ref{vanishing}, the $(r-1)$-relation hold in $B^3$, or in other terms we still have $\Sred(B^3)\simeq K$. 

\begin{remark}
Let $\overline{M}$ be the manifold $M$ with opposite orientation. The image of a Kauffman relation in $M$ is no longer a Kauffman relation in $\overline{M}$ unless we exchange $A$ and $A^{-1}$. Formally, there is an isomorphism $\S(M)\simeq\S(\overline{M})$ which maps $\lambda L$ to $\overline{\lambda}L$ where the involution $\lambda\mapsto\overline{\lambda}$ of $K$ is defined by $\overline{A}=A^{-1}$. We observe that the idempotents are constructed recursively with quantum integers $[n]$ which are fixed by the involution. This shows that the same map induces an anti-linear isomorphism $\Sred(M)\simeq \Sred(\overline{M})$. 
\end{remark}
\subsection{The reduced skein module of a handlebody}

Let $H$ be a handlebody, that is a $3$-manifold which retracts by deformation to a graph. Our first task is to find a finite generating set for $\Sred(H)$. 

We observe first that there exists a trivalent banded graph $(\Gamma,S_\Gamma)$ embedded in $H$ such that $H$ is homeomorphic to $S_\Gamma\times [0,1]$. 
Consider for any edge $e$ of $\Gamma$ a disc $D_e$ embedded in $H$ satisfying $\partial D_e\subset \partial H$ and intersecting $\Gamma$ in one point: we will say that $D_e$ is dual to $e$. One can suppose that these discs are disjoint and that the complement of their union is a collection of balls. 

\begin{lemma}\label{generateurs}
The vector space $\Sred(H)$ is generated by banded links which intersect each dual disc in at most $r-2$ points. 
\end{lemma}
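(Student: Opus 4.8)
The plan is to show that any banded link in $H$ can be isotoped, modulo the reduced skein relations, into a form that meets each dual disc $D_e$ in at most $r-2$ points. Since $\Sred(H)$ is spanned by isotopy classes of banded links (these already span $\S(H)$, and we pass to the quotient), it suffices to treat a single link $L$ and produce from it a $K$-linear combination of links, each intersecting every $D_e$ in at most $r-2$ points, that equals $L$ in $\Sred(H)$.

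\medskip

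\emph{First} I would localize the problem to a single dual disc. Fix an edge $e$ and suppose $L$ meets $D_e$ transversally in exactly $k$ points. A tubular neighbourhood of $D_e$ is a cube $[0,1]^3$ whose central square $[0,1]^2\times\{1/2\}$ is identified with $D_e$, and inside this cube $L$ looks like $k$ parallel strands crossing the disc, i.e.\ an element of $T_k$ for the product structure on the Temperley--Lieb algebra. The key algebraic input is that $1_{k}$, the identity tangle of $k$ strands, can be rewritten using the Jones--Wenzl idempotent. Precisely, for $k\ge r-1$ I would use the decomposition coming from Theorem~\ref{jw}(3): since $f_{k}-1_{k}\in I_{k}$, we may write $1_{k}=f_{k}+(1_{k}-f_{k})$ where the second term lies in the ideal $I_k$ generated by the $e_i$, hence is a combination of tangles meeting the central square in strictly fewer than $k$ points.

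\medskip

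\emph{The crucial step} is that whenever $k\ge r-1$, the idempotent term $f_{k}$ is killed in $\Sred(H)$. Indeed, by the absorption property $f_k = f_{r-1}\,(\cdots)$ — more concretely, cabling $f_{r-1}$ by extra strands and using $f_k f_{r-1}=f_{r-1}f_k=f_k$ together with Property~2 of Theorem~\ref{jw} — one sees that $f_{k}$ factors through a copy of $f_{r-1}$ inserted in a small cube sitting inside the neighbourhood of $D_e$. But an $f_{r-1}$ inside a cube is exactly an $(r-1)$-relation, so it represents $0$ in $\Sred(H)$. Therefore, modulo the reduced relations, $L$ equals $1_k - f_k = (1_k-f_k)$ applied across $D_e$, and this is a combination of links meeting $D_e$ in at most $k-1 < k$ points.

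\medskip

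\emph{Finally} I would iterate and organize the descent. Applying the above at $D_e$ strictly decreases the intersection number with $D_e$ at the cost of introducing links with smaller intersection numbers there; repeating brings every link below $r-1$, i.e.\ to at most $r-2$ intersections with $D_e$. One must check that reducing at $D_e$ does not increase the intersection numbers with the other dual discs $D_{e'}$; since the whole manipulation happens in a neighbourhood of $D_e$ and the $D_{e'}$ are disjoint from it (the discs can be taken disjoint), the other intersection counts are unaffected. Running over all (finitely many) edges $e$ then yields a spanning set of the desired form. \textbf{The main obstacle} I anticipate is making the factorization ``$f_k$ contains an $f_{r-1}$'' rigorous: one needs to justify that $k$ strands carrying $f_k$ can be split, using the idempotent and absorption identities, so as to exhibit a genuine $f_{r-1}$ localized in an embedded cube disjoint from the rest of $L$, thereby matching the definition of an $(r-1)$-relation. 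This is precisely the point where the vanishing Lemma~\ref{vanishing} and the structure of the ideal $I_k$ must be combined carefully.
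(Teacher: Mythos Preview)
Your overall strategy is right, but there is a genuine gap in the execution: for $k>r-1$ the Jones--Wenzl idempotent $f_k$ simply does not exist over $K$. The recursion in Theorem~\ref{jw} requires dividing by $[n]$, and since $[r]=0$ in $K$ the idempotent $f_r$ (and hence all higher $f_k$) is undefined; the paper states this explicitly just before Lemma~\ref{vanishing}. So your decomposition $1_k=f_k+(1_k-f_k)$ and the subsequent ``$f_k$ factors through $f_{r-1}$'' argument are not available when $k\ge r$.

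The fix is the one the paper uses and it also dissolves the obstacle you flagged at the end. Instead of grabbing all $k$ strands crossing $D_e$, pick any $r-1$ of them and enclose just those in a cube, so that the local picture is $1_{r-1}\in T_{r-1}$. Now write $f_{r-1}=1_{r-1}+x_{r-1}$ with $x_{r-1}\in I_{r-1}$. Inserting $f_{r-1}$ in that cube is \emph{by definition} an $(r-1)$-relation, so it vanishes in $\Sred(H)$; hence $1_{r-1}\equiv -x_{r-1}$ there, and $-x_{r-1}$ meets the central square in at most $r-2$ points. The total intersection of the resulting links with $D_e$ has therefore dropped by at least one, the other discs are untouched since the cube lies in a neighbourhood of $D_e$, and you iterate. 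No absorption identity or factorization of a higher $f_k$ through $f_{r-1}$ is needed.
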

\begin{proof}
We proceed by induction: any time we find $r-1$ points on $L\cap D_e$ for some edge $e$, we can find up to isotopy a cube $[0,1]^3$ embedded in $D_e\times[0,1]$ such that $[0,1]^3\cap L=1_{r-1}$. By writing $f_{r-1}=1_{r-1}+x_{r-1}$ with $x_{r-1}\in I_{r-1}$, one can replace modulo a $(r-1)$-relation $1_{r-1}$ by $-x_{r-1}$ which has strictly less crossing points with $D_e$. The result follows. 
\end{proof}

\begin{definition}
A $r$-coloring of a trivalent graph $\Gamma$ is a map $c:E(\Gamma)\to \{0,\ldots,r-2\}$ satisfying for all edges $e,e',e''$ incident to a vertex $v$ the following  triangular conditions:
$$c(e)+c(e')+c(e'')\text{ is even}\quad\text{ and }\quad c(e)\le c(e')+c(e'').$$
These properties ensure that at each vertex there exists $i,i',i''\in \N$ such that $c(e)=i'+i'', c(e')=i+i''$ and $c(e'')=i+i'$. These integers are called the {\it internal colors} at the vertex $v$. 

We can associate to each coloring $c$ the {\it skein expansion} $\Gamma_c\in \Sred(S_\Gamma)$ by the following gluing procedure. Put $f_{c(e)}$ along each edge $e$ and join the remaining strands around the vertices in the unique way that avoid crossing and back-tracking. For instance, at a vertex where $e,e',e''$ are incident,  $i''$ strands will go from $e$ to $e'$, etc. as in Figure \ref{triad}. 
\end{definition}
\begin{figure}[htbp]
\begin{center}
 \def\svgwidth{6cm}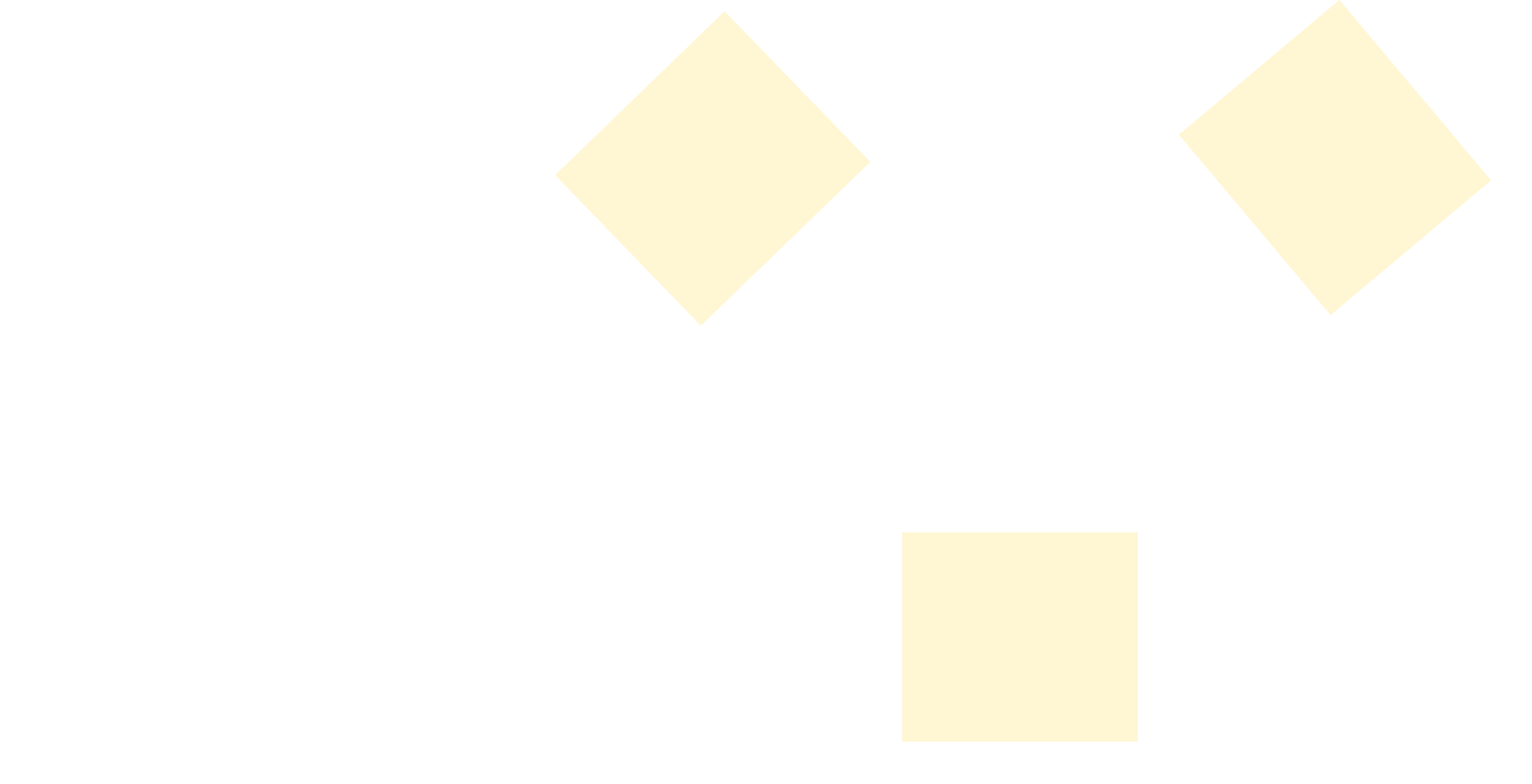
\caption{Skein expansion around a vertex}\label{triad}
\end{center}
\end{figure}

\begin{lemma}\label{reduction}
The elements $\Gamma_c$ where $c$ runs over $r$-colorings generate $\Sred(S_\Gamma)$.
\end{lemma}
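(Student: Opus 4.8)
The plan is to argue by induction on the total intersection number $N(L)=\sum_{e\in E(\Gamma)}|L\cap D_e|$, over the generating family furnished by Lemma \ref{generateurs}: banded links $L$ with $|L\cap D_e|\le r-2$ for every edge $e$. I will show each such $L$ lies in the span of the $\Gamma_c$. For the base case $N(L)=0$, the link is disjoint from all the dual discs, hence contained in the complementary balls; since $\Sred(B^3)\simeq K$, each component reduces to a scalar and $L$ is a multiple of the empty link, which is $\Gamma_0$ for the (admissible) coloring $c\equiv 0$.

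For the inductive step I insert a Jones-Wenzl idempotent along every edge. Fix $e$ with $n_e:=|L\cap D_e|>0$; in a cube neighbourhood of a sub-arc of $e$ transverse to $D_e$ the link appears as the trivial tangle $1_{n_e}$, and by Theorem \ref{jw}(3) we may write $1_{n_e}=f_{n_e}-(f_{n_e}-1_{n_e})$ with $f_{n_e}-1_{n_e}\in I_{n_e}$. This replaces $L$ by the element with $f_{n_e}$ inserted along $e$, minus an error term built from a piece of $I_{n_e}$ supported in the cube. Re-expanding every inserted idempotent back into a linear combination of tangles, each error term is a combination of \emph{banded links} whose intersection with $D_e$ is strictly smaller while all other intersection numbers are unchanged; thus each has strictly smaller $N$ and is absorbed by the induction hypothesis. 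Iterating over all edges reduces $L$, modulo the span of the $\Gamma_c$, to the single element $\Gamma^{\mathrm{id}}$ obtained from $L$ by placing $f_{n_e}$ on every edge and retaining the arcs of $L$ inside the balls around the vertices.

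It remains to identify $\Gamma^{\mathrm{id}}$, and this is a purely local question at each trivalent vertex $v$ with incident colors $a,b,c$. In the ball around $v$ the arcs of $L$ join the $a+b+c$ strand-ends emanating from the three idempotents $f_a,f_b,f_c$; resolving all crossings by the Kauffman relation (Theorem \ref{structure_skein} for a ball) expresses this local tangle as a combination of crossingless matchings of the endpoints, together with scalars from closed loops. Any matching joining two ends of the same idempotent contains a turn-back, hence factors through some $e_i$ and is annihilated by Property 2 of Theorem \ref{jw}. The surviving matchings connect ends of distinct edges; writing $i,i',i''$ for the numbers of strands running between the pairs $(e',e''),(e,e''),(e,e')$, the equations $i'+i''=a,\ i+i''=b,\ i+i'=c$ admit a nonnegative integer solution, necessarily unique and realized by a unique non-crossing pattern, exactly when $a+b+c$ is even and the triangle inequalities hold, and admit none otherwise. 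Hence $\Gamma^{\mathrm{id}}$ is a scalar multiple of $\Gamma_c$ with $c(e)=n_e\le r-2$ when $c$ meets the coloring conditions at every vertex, and is $0$ otherwise. This places $L$ in the span of the $\Gamma_c$ and closes the induction.

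The main obstacle is this local vertex computation: one must verify that, after the three idempotents have absorbed every turn-back, precisely one crossingless matching survives and that its combinatorial type is governed exactly by the evenness and triangle conditions defining a coloring, so that no term outside the family $\{\Gamma_c\}$ can appear. The secondary point requiring care is the bookkeeping of intersection numbers: one checks that expanding the inserted idempotents back into tangles produces error terms of strictly smaller total intersection number, which is what makes the induction well-founded.
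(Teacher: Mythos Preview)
Your proof is correct and follows essentially the same approach as the paper's: induction on intersection number with the dual discs, replacing $1_{n_e}$ by $f_{n_e}$ modulo lower terms in $I_{n_e}$, and then observing that at each vertex only the unique non-back-tracking crossingless matching survives the idempotents. You spell out the vertex analysis (parity and triangle inequalities forcing a unique planar pattern) and the base case more explicitly than the paper, which simply asserts that ``the only way for joining the strands around a vertex which does not vanish is the one shown in Figure~\ref{triad}'', but the argument is the same.
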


\begin{proof}
We proceed again by induction. By Lemma \ref{generateurs}, $\Sred(S_\Gamma)$ is generated by simple links which cross each dual disc $D_e$ at $n(e)$ points where $n(e)<r-1$. Inserting a cube, one can write $1_{n(e)}=f_{n(e)}-x_{n(e)}$. If we replace $1_{n(e)}$ by $f_{n(e)}$, we observe that the only way for joining the strands around a vertex which does not vanish is the one shown in Figure \ref{triad}. Hence the element we obtained is proportional to some $\Gamma_c$. By induction, as the element obtained by inserting $x_{n(e)}$ has strictly less crossing points with $D_e$, it can be expressed by skein expansions of colored graphs. 
\end{proof}

We will even reduce this generating family by using the identities shown in Figure \ref{fusion}. All these identities are understood in $\S(B^3,P)$ where $P$ is a fixed collection of points with a cardinality coherent with the diagram ($2n+2$ in the first one, for instance). If no color is indicated, it is understood to be 1. 
The first line comes from the definition of the skein expansion of colored graphs and the recursion formula for the Jones-Wenzl idempotents. The last ones can be derived by induction. We refer to \cite{MasbaumVogel} for a proof. 

\begin{figure}[htbp]
\begin{center}
 \def\svgwidth{10cm}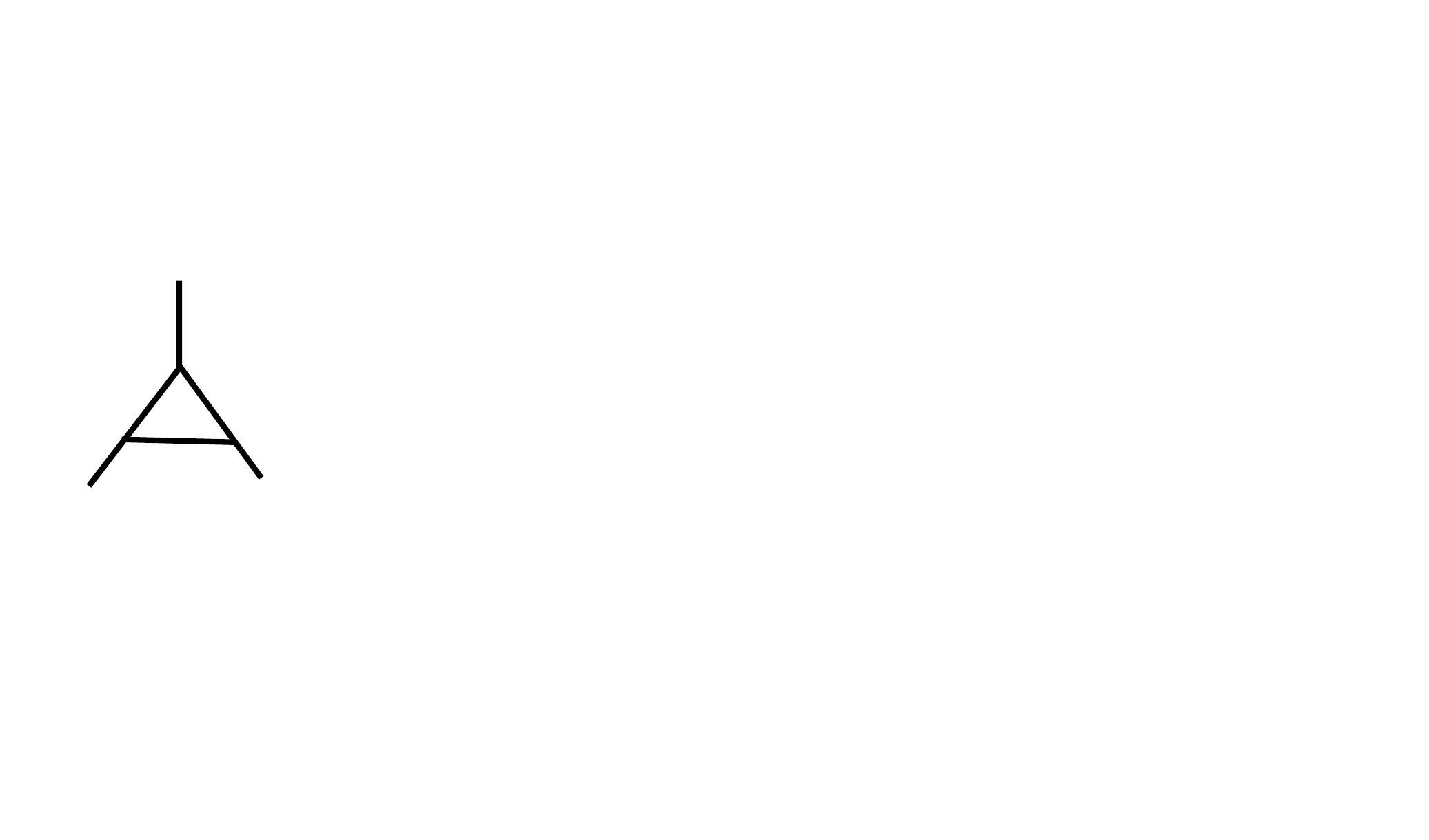
\caption{Simple fusion rules}\label{fusion}
\end{center}
\end{figure}

\begin{lemma}
Let $a,b,c\in \{0,\ldots,r-2\}$ be a triangular triple satisfying $a+b+c\ge 2r-2$. A $Y$-shaped trivalent graph colored with $(a,b,c)$ vanishes in the $r$-reduced relative skein module of the ball. 
\end{lemma}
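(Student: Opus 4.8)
The plan is to prove the statement by a downward induction, the only engine being the fusion identity of Figure~\ref{fusion} that lowers two of the three colors by one. Writing $Y(a,b,c)$ for the element of $\S(B^3,P)$ given by the $Y$-vertex colored $(a,b,c)$, and choosing to lower the two legs colored $b$ and $c$ while keeping the leg colored $a$, that identity reads
$$Y(a,b,c)=-\frac{[(a+b+c)/2+1]\,[(b+c-a)/2]}{[b]\,[c]}\;Y(a,b-1,c-1),$$
understood as an equality in $\S(B^3,P)$ (both diagrams carry the same boundary $P$, the right-hand one acquiring two auxiliary cap strands to restore it). By the invariance of the trivalent vertex under permuting its legs I am free to apply this with any choice of the two legs to be lowered. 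The decisive feature is the numerator factor $[(a+b+c)/2+1]$: it equals $[r]=0$ precisely when $a+b+c=2r-2$, whereas the denominators $[b],[c]$ are invertible since $b,c\le r-2$. Note that, because this is already an identity in the ordinary module $\S(B^3,P)$ over $K$, the vanishing I obtain will hold there and hence a fortiori in the quotient $\Sred(B^3,P)$.

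Before running the induction I would record that under the hypotheses all three colors are at least $1$: if the smallest color were $0$, triangularity would force the other two to be equal, giving $a+b+c\le 2(r-2)=2r-4<2r-2$. Consequently I may always arrange $a$ to be the smallest color and lower the two largest legs $b,c$. The resulting triple $(a,b-1,c-1)$ is again admissible: the color-sum stays even, the colors only decrease so remain in $\{0,\dots,r-2\}$, and the triangle inequalities survive — the only one needing attention is $a\le(b-1)+(c-1)$, equivalent to $(b+c-a)/2\ge 1$, which holds because $b+c-a\ge c\ge 1$ is even and hence at least $2$. Thus each application of the identity lands on a legitimate admissible triple whose color-sum has dropped by exactly $2$, and its coefficient $[(b+c-a)/2]$ is a genuine quantum integer.

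The induction is then immediate, on $m=\tfrac12\big(a+b+c-(2r-2)\big)\in\N$. For $m=0$, i.e. $a+b+c=2r-2$, the displayed coefficient contains the factor $[r]=0$, so $Y(a,b,c)=0$ at once, with no appeal to the induction hypothesis. For $m\ge 1$ the identity gives $Y(a,b,c)=\lambda\,Y(a,b-1,c-1)$, where the target triple is admissible with color-sum $2r-2+2(m-1)\ge 2r-2$; by the induction hypothesis $Y(a,b-1,c-1)=0$, whence $Y(a,b,c)=\lambda\cdot 0=0$. This proves the vanishing in $\S(B^3,P)$, and therefore in the reduced module $\Sred(B^3,P)$.

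I expect the single source of vanishing to be the identity $[r]=0$, and the only genuine difficulty to be bookkeeping rather than conceptual. Concretely, the points to be careful about are: that the invoked identity from Figure~\ref{fusion} is exactly the one lowering two colors (with the coefficient displayed there); that its applicability hypotheses — both lowered colors $\ge 1$ and the strict triangle inequality $(b+c-a)/2\ge 1$ for the kept leg — hold at each step, which the second paragraph guarantees once one always lowers the two largest colors; and that the boundary data $P$ is matched on the two sides by the auxiliary cap strands so that the equation really lives in a single module $\S(B^3,P)$. With these verified, the argument reduces to a clean induction propagating the zero produced by $[r]=0$.
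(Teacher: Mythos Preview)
Your argument has a genuine gap at its very core: the displayed identity
$$Y(a,b,c)=-\frac{[(a+b+c)/2+1]\,[(b+c-a)/2]}{[b]\,[c]}\;Y(a,b-1,c-1)$$
is not what Figure~\ref{fusion} says. Look at the boundary data: the two sides of the identity in Figure~\ref{fusion} both live in the module with legs $(a,b-1,c-1)$. The left-hand side is the $(a,b,c)$-triad \emph{composed with a cap} joining one strand of the $b$-leg to one of the $c$-leg; the right-hand side is the plain $(a,b-1,c-1)$-triad. No amount of ``auxiliary cap strands'' on the right restores an equality of the form you wrote, because adding a cap to the left is not an invertible operation. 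So when $a+b+c=2r-2$ and the coefficient vanishes, all you learn is that a particular capping of $Y(a,b,c)$ is zero --- not that $Y(a,b,c)$ itself is zero. This kills your base case, and with it the whole induction.

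In fact your stronger claim, that the vanishing already holds in the ordinary module $\S(B^3,P)$, is false. Take $r=4$ and $(a,b,c)=(2,2,2)$. Expanding $f_2=1_2+\tfrac{1}{[2]}e_1$ on each leg and applying it to the internal tangle, the coefficient of that internal tangle in the standard simple-tangle basis of $\S(B^3,P_6)$ is~$1$. So $Y(2,2,2)\ne 0$ in $\S(B^3,P_6)$; the triad only dies after passing to $\Sred$. The mechanism that actually makes it vanish is the $(r-1)$-relation, not the equation $[r]=0$ alone.

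The paper's proof supplies exactly the missing ingredient. It uses the \emph{other} fusion rule of Figure~\ref{fusion}, the one with coefficient $\tfrac{[(a+b-c)/2]}{[b]}$, which shifts one unit of color from one leg to another while keeping the sum $a+b+c$ constant. Iterating this move pushes the largest color up until some leg carries the color $r-1$; at that point the triad literally contains the idempotent $f_{r-1}$ and is therefore an $(r-1)$-relation, hence zero in $\Sred(B^3,P)$. Your inductive step (going from $Y(a,b-1,c-1)=0$ to $Y(a,b,c)=0$) could be salvaged via the coefficient-free identity in Figure~\ref{fusion}, but you would still need this argument for the base case $a+b+c=2r-2$.
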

\begin{proof}
Let us call this $Y$-shaped graph a $(a,b,c)$-triad. 
Using the second line of Figure \ref{fusion}, we see that the vanishing of an $(a,b,c)$-triad implies the vanishing of the $(a,b-1,c+1)$-triad provided one has $a+b-c<2r$ and $b<r$. As we may write $a=j+k,b=i+k,c=i+j$, these conditions read $i+k<r$ and $k<r$ and hence are automatically satisfied. 

Suppose that we have $a\le b\le c$: if $a=0$, then $b=c=r-1$ and the triad vanishes as it contains a $(r-1)$-relation. If $c=r-1$, we stop for the same reason. Else, we replace the triad $(a,b,c)$ with $(a-1,b,c+1)$ and use induction. 
\end{proof}

\begin{definition} 
We will say that a $r$-coloring $c$ is $r$-admissible if for any triple $(e,e',e'')$ of edges incident to a same vertex one has $c(e)+c(e')+c(e'')<2r-2$. 
\end{definition}
The lemmas of these sections imply that the family $(\Gamma_c)$ for $c$ a $r$-admissible coloring is a generating set for $\Sred(H)$. We will show in the next section that they form a basis. 

\subsection{A Hermitian form on $\Sred(H)$}
Let us consider a standard embedding of $[0,1]^2$ into $S^2$ and take a point in the complement called $\infty$. We define the spherical reduced Temperley-Lieb algebra $TS_n=\Sred(S^2\times [0,1],P_n\times\{0,1\})$ and observe that this induces a surjection $\pi:T_n\to TS_n$ as any tangle in $S^2\times [0,1]$ avoid $\{\infty\}\times [0,1]$ up to isotopy.  

\begin{lemma}\label{annulationspherique}
For any $n\in 2\N$, there is an element $P_n=\sum_i \lambda_i L_i\in T_n$ where each tangle $L_i$ does not intersect $[0,1]^2\times \{1/2\}$ such that $\pi(P_n-1_n)=0$. If $n$ is odd, we set $P_n=0$ and the same equation holds. 
\end{lemma}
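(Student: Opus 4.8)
The plan is to prove the slightly stronger and uniform statement that $\pi(1_n)$ always lies in $\pi(J_n)$, where $J_n\subset T_n$ denotes the span of those tangles that do not meet the middle square $[0,1]^2\times\{1/2\}$; for $n$ odd one has $J_n=0$, so this specializes to $\pi(1_n)=0$ and $P_n=0$, exactly as claimed. I would argue by strong induction on $n$, with trivial base case $n=0$ (take $P_0=1_0$). The engine throughout is a \emph{far-side slide} on the sphere: whenever a bundle of parallel strands meets a cross-section $S^2\times\{t\}$ in finitely many points and nothing else, a small loop $c$ encircling the bundle at level $t$ bounds a disc on the \emph{opposite} side of that sphere, disjoint from the whole tangle; hence $c$ may be contracted across it and contributes only the unknot factor $-A^2-A^{-2}=-[2]$.

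For $0<n<r$ I would apply this to the idempotent $f_n$. Encircling $f_n$ by such a loop $c$ and resolving it next to the bundle gives $\epsilon(y_n)\,f_n$ in $T_n$, with $\epsilon(y_n)=-A^{2n+2}-A^{-2n-2}=:\mu_n$ by Lemma~\ref{cerclage}; performing instead the far-side slide gives $-[2]\,f_n$ in $TS_n$. Comparing the two evaluations yields $(\mu_n+[2])\,\pi(f_n)=0$, and since $\mu_n+[2]=-A^{-2n-2}(A^{2n}-1)(A^{2n+4}-1)$ is nonzero for $0<n<r$, we obtain $\pi(f_n)=0$. By Property $3$ of Theorem~\ref{jw} this gives $\pi(1_n)=\pi(1_n-f_n)$ with $1_n-f_n\in I_n$; expressing $I_n$ through tangles meeting the middle in $k<n$ points and isotoping each so that its $k$ through-strands form a trivial bundle $1_k$ in a thin slab $S^2\times[1/2-\delta,1/2+\delta]$, the inductive hypothesis $\pi(1_k)=\pi(P_k)$ with $P_k\in J_k$ lets me replace $1_k$ by $P_k$ inside the slab. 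The resulting tangles no longer meet the middle, so their linear combination is the sought $P_n\in J_n$.

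For $n\ge r$ the idempotent $f_n$ is unavailable, but here I would use the defining $(r-1)$-relation directly: choosing $r-1$ adjacent strands of $1_n$ and writing $1_{r-1}=f_{r-1}-x_{r-1}$ with $x_{r-1}\in I_{r-1}$, the term carrying $f_{r-1}$ vanishes in $\Sred$, so $\pi(1_n)$ equals the $\pi$-image of tangles meeting the middle in strictly fewer than $n$ points; these are finished off by the inductive hypothesis exactly as above.

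The delicate point, which I expect to be the main obstacle, is the legitimacy of inserting the inductive relation $\pi(1_k)=\pi(P_k)$ into the slab of an ambient through-$k$ tangle. This is sound precisely because the far-side slides used to derive that relation take place at levels whose cross-section meets only the $k$ through-strands; as the ambient tangle is through-$k$, its remaining portions lie wholly below and above the slab and never obstruct the opposite face of the sphere. Formally this is the gluing compatibility of $\Sred$ along the two spheres bounding the slab, applied to the identity $\pi(1_k-P_k)=0$ of $TS_k$. The only genuinely computational ingredient is the nonvanishing of $\mu_n+[2]$, that is $A^{2n}\ne1$ and $A^{2n+4}\ne1$ for $0<n<r$, which follows at once from $A$ being a primitive $4r$-th root of unity.
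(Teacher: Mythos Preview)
Your proof is correct and follows essentially the same approach as the paper: both argue by strong induction, use the sphere trick (your ``far-side slide'') on the encircling element $y_n$ to deduce $\pi(f_n)=0$ for small $n$, write $1_n=f_n+(\text{terms in }I_n)$ to descend, and invoke the $(r-1)$-relation to cut down large $n$. The only cosmetic difference is that the paper switches to the $(r-1)$-relation already at $n=r-1$, whereas you push the encircling argument through $n=r-1$ as well and switch only at $n\ge r$; both splits work, since the nonvanishing of $\mu_n+[2]$ that you check holds for all $0<n<r$.
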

\begin{proof}
Let us prove the lemma by a double induction by setting $P_0=1_0$. Take $n$ satisfying $0<n<r-1$. One can write $1_n=f_n+z_n$ where $z_n\in I_n$. As $z_n$ can be written as a sum of tangles with less than $n$ crossings with $[0,1]^2\times\{1/2\}$, we can apply the induction hypothesis and reduce the problem to $f_n$. 
The key point is that the tangle $x_n$ of Lemma \ref{cerclage} is isotopic in $S^2\times [0,1]$ to the disjoint union of $1_n$ with a trivial circle. Using the corresponding formula we get in $TS_n$ $(-A^{2n+2}-A^{-2n-2})f_n=(-A^2-A^{-2})f_n$. This implies that $f_n=0$ thanks to our assumptions on $n$. In particular, we have $\pi(f_1)=\pi(1_1)=0$ which allows to start the double induction.
Suppose now that $n\ge r-1$. We can as in Lemma \ref{reduction} join $(r-1)$ parallel strands and use the vanishing of $f_{r-1}$ to reduce the number of intersection points and apply the induction hypothesis.
\end{proof}

\begin{corollary}
For any $g\ge 1$, let $M_g=(S^2\times S^1)^{\# g}$ be the connected sum of $g$ copies of $S^2\times S^1$. Then $\Sred(M_g)$ is naturally isomorphic to $K$. 
\end{corollary}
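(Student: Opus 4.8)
The plan is to prove that the natural map $K\cong\Sred(B^3)\to\Sred(M_g)$ induced by the inclusion of a ball, which sends the unit to the class $[\emptyset]$ of the empty link, is an isomorphism. First I would fix a concrete model of $M_g$: remove $2g$ disjoint balls from $S^3$ and glue in $g$ copies of the tube $S^2\times[0,1]$, matching the resulting boundary spheres in pairs. Each tube carries a belt sphere $\Sigma_i=S^2\times\{1/2\}$, and $\Sigma_1,\dots,\Sigma_g$ are pairwise disjoint and nonseparating. Writing $X=M_g\setminus(\Sigma_1\cup\cdots\cup\Sigma_g)$, the manifold $X$ is homeomorphic to $S^3$ minus $2g$ balls, so capping these balls gives $\Sred(X)\xrightarrow{\sim}\Sred(S^3)$; since any link in $S^3$ isotopes into a ball and $(r-1)$-relations vanish in $B^3$ by Lemma \ref{vanishing}, the Kauffman bracket descends to an isomorphism $\Sred(S^3)\cong K$. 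Composing yields a natural map $j\colon K\cong\Sred(S^3)\to\Sred(M_g)$ sending $[\emptyset]$ to $[\emptyset]$, and the goal is to show $j$ is an isomorphism.

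Surjectivity of $j$ is the concrete heart of the argument and is where Lemma \ref{annulationspherique} enters. Given a banded link $L\subset M_g$, I would isotope it transverse to every $\Sigma_i$, meeting $\Sigma_i$ in $n_i$ points. Fixing $i$, I choose a point $\infty\in\Sigma_i$ off $L$ and a collar $S^2\times[0,1]$ of $\Sigma_i$ meeting $L$ in $n_i$ straight strands and avoiding $\{\infty\}\times[0,1]$; this is exactly the spherical cylinder of Lemma \ref{annulationspherique}. Substituting the local relation $1_{n_i}=\pi(P_{n_i})$, where $P_{n_i}$ is a combination of tangles none of which crosses the middle square, rewrites $[L]$ as a $K$-combination of classes of links disjoint from $\Sigma_i$. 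Performing this for $i=1,\dots,g$ expresses $[L]$ as a combination of links lying in $X$, and under $\Sred(X)\cong\Sred(S^3)=K\cdot[\emptyset]$ this shows $[L]\in K\cdot[\emptyset]$. Hence $j$ is onto and $\dim_K\Sred(M_g)\le 1$.

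It remains to prove $j$ injective, equivalently $[\emptyset]\neq 0$ in $\Sred(M_g)$, and this is the step I expect to be the real obstacle. The difficulty is that the obvious functionals do not descend: closing a tangle of $TS_n$ inside a ball gives the trace $\tr$, but this does not factor through $\pi\colon T_n\to TS_n$ (already $\pi(f_1)=0$ while $\tr f_1=-[2]\neq 0$), so the Kauffman bracket cannot be transported across a nonseparating sphere. What is genuinely needed is a nonzero invariant $Z\colon\Sred(M_g)\to K$ with $Z([\emptyset])=1$, that is a left inverse of $j$ — a fragment of the TQFT partition function of the double.

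To build $Z$ I would use the Heegaard description $M_g=H_g\cup_{\Sigma_g}\overline{H_g}$ together with the standard genus-$g$ splitting $S^3=H_g\cup_{\Sigma_g}H_g'$. The gluings give pairings $\Sred(H_g)\times\Sred(\overline{H_g})\to\Sred(M_g)$ and $\Sred(H_g)\times\Sred(H_g')\to\Sred(S^3)=K$; the second is the Hopf/$S$-matrix pairing and is computable externally from Lemma \ref{cerclage} and the formula $\tr f_n=(-1)^n[n+1]$. The plan is to show this $S^3$-pairing is non-degenerate (equivalently that the colored graphs $\Gamma_c$ are linearly independent) and that the two splittings differ by the action on the boundary of a mapping class $S\in\mcg(\Sigma_g)$ which is invertible; transporting the non-degeneracy of the $S^3$-pairing through this action produces the required nonzero functional on $\Sred(M_g)$ and forces $[\emptyset]\neq 0$. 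The invertibility of this change of Heegaard gluing — the modularity input — is the one nontrivial ingredient, and it is what ultimately certifies $\Sred(M_g)\cong K$ rather than $0$. (Alternatively one may read the nonvanishing off the explicit nonzero norms of the $\Gamma_c$ computed in the next section, at the cost of a forward reference.)
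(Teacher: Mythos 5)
Your surjectivity argument is exactly the paper's: isotope $L$ transverse to the belt spheres, substitute $P_{n_i}$ for $1_{n_i}$ using Lemma \ref{annulationspherique}, and land in the reduced skein module of $S^3$ minus balls, which is $K$. The gap is in your treatment of injectivity, and it begins with a misdiagnosis. The paper never attempts to ``transport the Kauffman bracket across a nonseparating sphere'' by closing tangles in a ball; the inverse functional \emph{is} the cut-and-replace procedure you already used for surjectivity. That is, the substitution $1_n\mapsto P_n$ is used not merely to rewrite classes inside $\Sred(M_g)$ but to define a map $\Sred(M_g)\to\Sred(M_g\setminus S^2)$ inverse to the natural one. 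What actually has to be checked is that this assignment is independent of the isotopy putting $L$ transverse to the sphere and kills the Kauffman and $(r-1)$-relations of $M_g$ --- a local verification in the spirit of the proof of Theorem \ref{structure_skein}, requiring no new invariant. Iterating over the $g$ essential spheres identifies $\Sred(M_g)$ with the reduced skein module of a holed ball, hence with $K$, and in particular shows $[\emptyset]\neq 0$.

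The route you propose instead is circular at this stage of the paper. The pairing $\Sred(H_g)\times\Sred(\overline{H_g})\to\Sred(M_g)$ only becomes a $K$-valued Hermitian form \emph{because of} the present corollary; and to compare it with the $S^3$-pairing of Proposition \ref{linking} you would need the regluing mapping class to act by an invertible operator on $\Sred(H_g')$ --- but a mapping class of $\Sigma$ acts a priori only on $\Sred(\Sigma)$, and promoting that to an operator on the handlebody module is precisely the content of the main theorem (via Skolem--Noether), which sits downstream of this corollary. The same objection defeats your parenthetical alternative: the norms $\langle\Gamma_c,\Gamma_c\rangle$ are elements of $\Sred(M_g)$ and are nonzero scalars only once $\Sred(M_g)\simeq K$ is known; if $[\emptyset]$ were zero they would all vanish. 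So the ``modularity input'' you single out as the one nontrivial ingredient is neither available here nor needed.
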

\begin{proof}
Consider the map $K\to \Sred(M_g)$ mapping $1$ to the empty link. We wish to construct an inverse. To that aim, let $S^2\subset M_g$ be an essential sphere. Up to isotopy, a link $L$ in $M_g$ crosses the sphere at say $n$ points. As $\pi(1_n-P_n)=0\in TS_n$, we can replace $1_n$ with $P_n$ without changing the value in the reduced skein module. But this has the effect of removing all the intersection points. 
This proves that the natural map $\Sred(M_g\setminus S^2)\to \Sred(M_g)$ is an isomorphism. By removing sufficiently many essential sphere, the manifold reduces to an union of balls for which the result is already known. \end{proof}

Let $H$ be a handlebody. The manifold $H\cup\overline{H}$ is homeomorphic to a connected sum of copies of $S^2\times S^1$. Hence, by the preceding corollary the gluing map $\langle\cdot,\cdot\rangle_{H,\overline{H}}:\Sred(H)\times \Sred(\overline{H})\to \Sred(H\cup \overline{H})$ can be viewed as a Hermitian form on $\Sred(H)$. 

\begin{proposition}\label{sesqui}
Let $(\Gamma,S_\Gamma)$ be a banded trivalent graph and set $H=S_\Gamma\times [0,1]$. The skein expansions $\Gamma_c$ are orthogonal with respect to the Hermitian form and satisfy
$$\langle \Gamma_c,\Gamma_c\rangle =\frac{\prod\limits_{v\in V(\Gamma)}\langle c(e_v),c(e_v'),c(e_v'')\rangle}{\prod\limits_{e\in E(\Gamma)}\langle c(e)\rangle}.$$
In this formula $e_v,e_v',e_v''$ are the three edges incident to a vertex $v$ and $\langle a,b,c\rangle$ is the skein expansion in $B^3$ of a standard graph $\Theta$ with colors $a,b,c$. We have also set $\langle n\rangle=\tr f_n=(-1)^n[n+1]$.
\end{proposition}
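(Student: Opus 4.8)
The plan is to read the Hermitian form as a single scalar evaluation in the double $H\cup\overline{H}$ and then to localize that evaluation along a system of essential spheres, one per edge of $\Gamma$. By the preceding corollary, $H\cup\overline{H}$ is a connected sum of copies of $S^2\times S^1$, so $\Sred(H\cup\overline{H})\cong K$ and $\langle\Gamma_c,\Gamma_{c'}\rangle$ is precisely the scalar obtained by evaluating the disjoint union of $\Gamma_c\subset H$ and $\overline{\Gamma_{c'}}\subset\overline{H}$ in $\Sred(H\cup\overline{H})$. Geometrically these are two parallel (mirror) copies of coloured trivalent graphs in a neighbourhood of $S_\Gamma$ inside the double. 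For each edge $e$ the dual disc $D_e\subset H$, with $\partial D_e\subset\partial H$, doubles to an essential $2$-sphere $S_e=D_e\cup\overline{D_e}$; these spheres are disjoint and cut the double into one ball $B_v$ per vertex $v$ of $\Gamma$, namely the double of the complementary ball of $H$ around $v$. Inside $B_v$ the skein is the union of the tripod of $\Gamma_c$ at $v$ (carrying $f_{c(e)},f_{c(e')},f_{c(e'')}$) and the mirror tripod of $\overline{\Gamma_{c'}}$ at $\overline{v}$, and each sphere $S_e$ is met in $c(e)+c'(e)$ points.

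Next I would contract this decomposition: the full evaluation is the contraction of the network whose nodes are the balls $B_v$ and whose internal edges are the spheres $S_e$, the gluing along $S_e$ being the bilinear pairing induced on $\Sred(S^2\times[0,1],P_{c(e)+c'(e)}\times\{0,1\})=TS_{c(e)+c'(e)}$. The heart of the argument is the computation of this local pairing. Using Lemma \ref{annulationspherique} I would replace the identity tangle across $S_e$ by $P_{c(e)+c'(e)}$, a combination of tangles that do not cross the middle sphere, hence built only from cups and caps that pair the points on each side among themselves. A turn-back joining two strands inside the group $f_{c(e)}$ (or inside $f_{c'(e)}$) is annihilated by Theorem \ref{jw}(2); thus only turn-backs connecting the two groups survive, and a non-vanishing planar matching of the $c(e)$ strands of one idempotent with the $c'(e)$ strands of the other can exist only if $c(e)=c'(e)$. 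This yields the orthogonality $\langle\Gamma_c,\Gamma_{c'}\rangle=0$ whenever $c\neq c'$.

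When the colours agree, the same computation caps each idempotent $f_{c(e)}$ across $S_e$ against itself, contributing the normalising scalar $(\tr f_{c(e)})^{-1}=\langle c(e)\rangle^{-1}$. The cleanest check of this constant is the base case in which $\Gamma$ is a single loop: then $H$ is a solid torus, $\Gamma_c=\hat f_{c(e)}$ by Lemma \ref{tchebychev}, and a direct application of Lemma \ref{annulationspherique} in $S^2\times S^1$ gives $\langle\hat f_a,\hat f_b\rangle=\delta_{ab}\,\langle a\rangle^{-1}$, matching the claimed formula (empty vertex product over one edge). Having fixed the constant in this model case, the same sphere computation applies verbatim at each $S_e$ of a general graph.

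Finally I would assemble the pieces. After forcing $c=c'$ and extracting the factor $\langle c(e)\rangle^{-1}$ from each sphere, the two tripods inside each ball $B_v$ are joined along their three legs, with matched colours, into a single $\Theta$-graph coloured by $c(e_v),c(e_v'),c(e_v'')$, whose skein evaluation in $B^3$ is by definition $\langle c(e_v),c(e_v'),c(e_v'')\rangle$. Multiplying the vertex contributions and the edge factors $\langle c(e)\rangle^{-1}$ gives the stated formula. I expect the main obstacle to be exactly the local sphere computation of the second and third paragraphs: establishing \emph{simultaneously} the colour matching (orthogonality) and the exact normalisation $\langle c(e)\rangle^{-1}$, which is where Lemma \ref{annulationspherique}, the idempotent relations of Theorem \ref{jw}, and the trace formula $\tr f_n=\langle n\rangle$ all enter at once; by comparison the global assembly into a product over vertices and edges is pure bookkeeping.
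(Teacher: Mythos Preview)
Your approach is essentially the paper's own: cut the double $H\cup\overline H$ along the spheres $S_e=D_e\cup\overline{D_e}$, use Lemma~\ref{annulationspherique} to see that $f_{c(e)}\otimes f_{c'(e)}$ vanishes in $TS_{c(e)+c'(e)}$ unless $c(e)=c'(e)$ and equals $\langle c(e)\rangle^{-1}$ times a disconnected cap--cup tangle when they agree, then read off a $\Theta$-evaluation at each vertex. The paper phrases the sphere step as a single ``disconnecting formula'' and determines the constant $\langle a\rangle^{-1}$ by closing both sides of that identity in $B^3$ (getting $\langle a\rangle=\lambda\langle a\rangle^2$), rather than via a model case.

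The one place to correct is your loop check. A bare circle has no trivalent vertices, so the stated formula does not literally cover it, and a direct computation in $S^2\times S^1$ gives $\langle\hat f_a,\hat f_b\rangle=\delta_{ab}$, not $\delta_{ab}\langle a\rangle^{-1}$: after disconnecting at the single sphere you are left not with a union of vertex-balls but with $S^2\times[0,1]$, and closing it back up joins the cap and cup into $\tr(f_a)=\langle a\rangle$, cancelling the factor $\langle a\rangle^{-1}$. This does not affect your main argument---for a genuinely trivalent $\Gamma$ the spheres do decompose the double into balls $B_v$ and your bookkeeping is correct---but you should fix the normalisation by closing the disconnecting identity (as the paper does) rather than by appealing to the loop.
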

This proposition shows the linear independence of the family $(\Gamma_c)$ provided that one has $\langle \Gamma_c,\Gamma_c\rangle\ne 0$. 
Using induction and the formulas of Figure \ref{fusion}, we can show the following formula whose proof can be found in \cite{MasbaumVogel}.
\begin{lemma}
For any $r$-admissible triple $a,b,c$ with corresponding internal colors $i,j,k$ we have 
$$\langle a,b,c\rangle =(-1)^{i+j+k}\frac{[i+j+k+1]![i]![j]![k]!}{[a]![b]![c]!}$$
where we have set $[n]!=[n][n-1]\cdots [1]$. In particular, $\langle a,b,c\rangle\ne 0$. 
\end{lemma}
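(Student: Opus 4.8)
The plan is to prove the formula by induction on the internal color $i=(b+c-a)/2$, evaluating the closed theta graph with the help of the fusion rules of Figure \ref{fusion}; by the symmetry of $\Theta$ in its three edges there is no loss in singling out $i$. The engine of the induction is the second line of Figure \ref{fusion}: inserting that local identity at one vertex of $\Theta$ rewrites the $(a,b,c)$-theta as a scalar multiple of the $(a,b-1,c-1)$-theta, and since the relation is supported in a ball the rest of the diagram is left untouched. Explicitly it should give
$$\langle a,b,c\rangle=-\frac{[(a+b+c)/2+1]\,[(b+c-a)/2]}{[b]\,[c]}\,\langle a,b-1,c-1\rangle=-\frac{[i+j+k+1]\,[i]}{[b]\,[c]}\,\langle a,b-1,c-1\rangle,$$
where the triple $(a,b-1,c-1)$ has internal colors $(i-1,j,k)$.

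For the base case $i=0$ one has $a=b+c$, so at each vertex no strand runs between the edges colored $b$ and $c$: the long idempotent $f_a$ simply splits into the juxtaposition $f_b\otimes f_c$ at one vertex and recombines at the other. Since $f_b\otimes f_c\equiv 1_a \bmod I_a$ while $f_a$ is idempotent and annihilates $I_a$ (Theorem \ref{jw}, items 1--3, exactly as already used in Lemma \ref{reduction}), the graph collapses to the single closed idempotent and
$$\langle b+c,b,c\rangle=\tr f_{b+c}=(-1)^{b+c}[b+c+1],$$
which is the asserted value at $(i,j,k)=(0,c,b)$.

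For the inductive step it then remains to check that the proposed closed form obeys the one-step recursion above. Writing $F(i,j,k)$ for the right-hand side of the statement and using $[n]!=[n]\,[n-1]!$, the quotient $F(i,j,k)/F(i-1,j,k)$ telescopes at once: the factorials $[j]!$ and $[k]!$ cancel, $[i]!/[i-1]!=[i]$, $[i+j+k+1]!/[i+j+k]!=[i+j+k+1]$, the sign contributes a factor $-1$, and $[b-1]!/[b]!=1/[b]$, $[c-1]!/[c]!=1/[c]$. This reproduces exactly the coefficient $-[i+j+k+1]\,[i]/([b]\,[c])$, so $F$ satisfies the same recursion and agrees with the base case; the formula follows by induction on $i$.

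The point that requires the most care is the book-keeping of the reduction: one must verify that the local move of Figure \ref{fusion} really produces the stated coefficient (the derivation of those fusion rules is the content of \cite{MasbaumVogel}), and that admissibility and the parity condition survive each step, i.e. that $(a,b-1,c-1)$ is again a triangular triple whenever $i>0$ — which holds because $i>0$ forces $b,c\ge 1$ and $b+c\ge a+2$, so that no color becomes negative and the reduction terminates precisely at $i=0$. The final nonvanishing assertion is then immediate from the closed form: $r$-admissibility gives $i+j+k+1=(a+b+c)/2+1<r$ and $a,b,c\le r-2<r$, so every quantum integer occurring in the numerator and the denominator lies in $\{[1],\dots,[r-1]\}$ and is invertible in $K$ by hypothesis; hence $\langle a,b,c\rangle\neq 0$.
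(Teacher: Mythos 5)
Your proof is correct and follows exactly the route the paper indicates but does not carry out: induction using the fusion rules of Figure \ref{fusion}, with the closed form checked against the one-step recursion, the whole argument being deferred in the text to \cite{MasbaumVogel}. Your base case $\langle b+c,b,c\rangle=\tr f_{b+c}$ and the telescoping of the coefficient $-[i+j+k+1][i]/([b][c])$ both check out, and you correctly isolate the only point taken on faith (the coefficient in the local fusion move), which is the same point the paper itself outsources.
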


\begin{proof}(Of Proposition \ref{sesqui})
Let $c$ and $c'$ be two $r$-admissible colorings and let us compute $\langle \Gamma_c,\Gamma_{c'}\rangle\in \Sred(H\cup\overline{H})$. Fix an edge $e$ of $\Gamma$ and consider the disc $D_e$ dual to $e$ in $H$ and $\overline{D}_e$ the same disc in $\overline{H}$. Their union form an essential sphere $S^2$ in $H\cup\overline{H}$. The union $\Gamma_c\cup \Gamma_{c'}$ cut this disc at $c(e)+c'(e)$ points and contains the juxtaposition of $f_{c(e)}$ and $f_{c(e')}$. 

Let us compute $f_a\otimes f_b$ in $TS_{a+b}$. As $1_{a+b}=P_{a+b}$ in $TS_{a+b}$ by Lemma \ref{annulationspherique}, we compute instead $(f_a\otimes f_b) P_{a+b}$. Recall that one can write $P_{a+b}$ as a linear combination of simple tangles without any component going from one side to the other. A the same time, if any strand of $f_a$ (resp. $f_b$) goes back to $f_a$ (resp. $f_b$), then we get $0$. The only way to have some non-zero term is to have $a=b$ and all the strands of $f_a$ are connected to strands of $f_b$. Hence, we have the identity shown in Figure \ref{retour}.
\begin{figure}[htbp]
\begin{center}
 \def\svgwidth{10cm}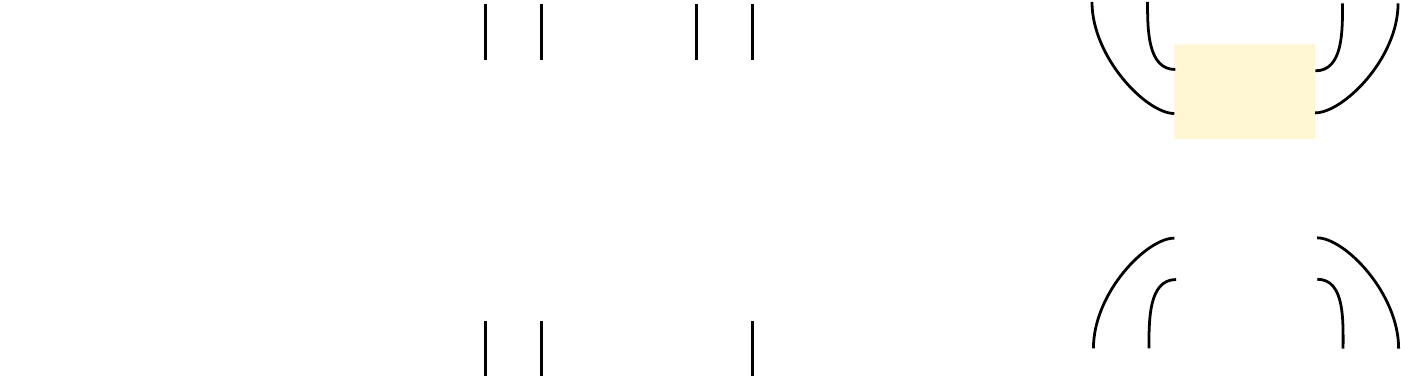
\caption{Disconnecting formula}\label{retour}
\end{center}
\end{figure}

By closing it at both sides, we get $\langle a\rangle=\lambda\langle a\rangle^2$ hence $\lambda=\frac{1}{\langle a\rangle}$. This proves orthogonality and shows that up to the $\lambda$ factors, $\langle \Gamma_c,\Gamma_c\rangle$ reduces to a disjoint union of Theta graphs. The result follows. 
\end{proof}

Let us notice that the vector space $\Sred(H)$ we just defined is indeed the same as the vector space $V_{2r}(\Sigma)$ of \cite{bhmv}. In particular, we get the following (Verlinde) formula for its dimension where $g$ denotes the genus of $\Sigma$:
$$\dim \Sred(H)=\Big(\frac{r}{2}\Big)^{g-1}\sum_{j=1}^{r-1} \Big(\sin \frac{\pi j}{r}\Big)^{2-2g}.$$

\subsection{Linking form on handlebodies}
Let $H$ be a handlebody embedded in $S^3$ in such a way that its complement $H'$ is also a handlebody. Then the gluing map $\langle\cdot,\cdot\rangle_{H,H'}:\Sred(H)\times \Sred(H')\to \Sred(S^3)\simeq K$ can be viewed as a bilinear form.  

\begin{proposition}\label{linking}
The bilinear form $\langle \cdot,\cdot\rangle_{H,H'}$ is non-degenerate. 
\end{proposition}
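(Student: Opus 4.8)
The plan is to compute the Gram matrix of $\langle\cdot,\cdot\rangle_{H,H'}$ in the orthogonal bases furnished by Proposition~\ref{sesqui} and show it is invertible. Choose a banded trivalent spine $\Gamma$ of $H$ and a spine $\Gamma'$ of $H'$. By Proposition~\ref{sesqui}, applied to $H$ and to $H'$ separately, the skein expansions $\Gamma_c$ and $\Gamma'_{c'}$, with $c,c'$ ranging over $r$-admissible colorings, are bases of $\Sred(H)$ and $\Sred(H')$. Since $H$ and $H'$ are handlebodies of the same genus $g$, the Verlinde formula gives $\dim\Sred(H)=\dim\Sred(H')$, so the matrix $G_{c,c'}=\langle\Gamma_c,\Gamma'_{c'}\rangle_{H,H'}\in K$ is square, and the proposition amounts to the invertibility of $G$.

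First I would reduce to $g=1$. By Waldhausen's theorem all genus-$g$ Heegaard splittings of $S^3$ are isotopic, and an orientation-preserving diffeomorphism of $S^3$ carrying one splitting to another induces isomorphisms $\Sred(H)\to\Sred(\tilde H)$ and $\Sred(H')\to\Sred(\tilde H')$ intertwining the two linking forms; hence non-degeneracy is independent of the chosen splitting and I may take the standard one. The standard genus-$g$ splitting is the connected sum of $g$ copies of the standard genus-$1$ splitting. Cutting along the connect-sum spheres and removing them exactly as in Lemma~\ref{annulationspherique} yields the multiplicativity $\Sred(M\# M')\simeq\Sred(M)\otimes_K\Sred(M')$, under which both $\Sred(H)$ and $\Sred(H')$ factor over the $g$ handles and $G$ becomes the $g$-fold tensor product of the genus-$1$ matrix. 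A tensor product of matrices is invertible precisely when each factor is, so it remains to treat $g=1$.

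For $g=1$ the pieces $H$ and $H'$ are solid tori whose cores form a Hopf link in $S^3$; the basis of $\Sred(H)$ is $\{\hat f_a\}_{0\le a\le r-2}$ of Lemma~\ref{tchebychev}, and $G_{a,b}$ is the value in $\Sred(S^3)\simeq K$ of the Hopf link whose two components carry $f_a$ and $f_b$. Encircling $f_a$ by the component $\hat f_b=S_b(z)$ and using $\epsilon(y_a)=-A^{2a+2}-A^{-2a-2}$ from Lemma~\ref{cerclage} together with the Tchebychev identity $S_b(-y-y^{-1})=(-1)^b\frac{y^{b+1}-y^{-b-1}}{y-y^{-1}}$ (with $y=A^{2a+2}$), the circle colored $b$ acts on the strand colored $a$ by the scalar $(-1)^b\,[(a+1)(b+1)]/[a+1]$; multiplying by the closure $\tr f_a=(-1)^a[a+1]$ of Theorem~\ref{jw}(4) gives the closed formula $G_{a,b}=(-1)^{a+b}[(a+1)(b+1)]$. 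Fixing an embedding $K\hookrightarrow\C$ with $A^2=e^{i\pi/r}$ turns this into $G_{a,b}=(-1)^{a+b}\sin\!\big(\pi(a+1)(b+1)/r\big)/\sin(\pi/r)$, so $G=D\,M\,D$ with $D=\mathrm{diag}((-1)^a)$ invertible and $M_{a,b}=\sin(\pi(a+1)(b+1)/r)/\sin(\pi/r)$ the rescaled $(r-1)\times(r-1)$ discrete sine matrix. The orthogonality relation $\sum_{k=1}^{r-1}\sin(\pi jk/r)\sin(\pi j'k/r)=\tfrac r2\,\delta_{jj'}$ shows $M$ is proportional to an orthogonal matrix, hence invertible, and therefore so is $G$. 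Since the determinant is a nonzero element of $K$ under one embedding into $\C$, it is nonzero in $K$, completing the genus-$1$ case and the proof.

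The hard part will be the genus-$1$ computation and the resulting non-vanishing: one must correctly identify the Hopf-link pairing with the $S$-matrix $(-1)^{a+b}[(a+1)(b+1)]$ and then recognize it, after passing to the sine form, as a rescaled orthogonal matrix. The reduction from arbitrary $g$ to $g=1$ is conceptually routine but rests on two inputs that I would state carefully: the invariance of the linking form under the diffeomorphism realizing Waldhausen's uniqueness of Heegaard splittings of $S^3$, and the connected-sum multiplicativity $\Sred(M\# M')\simeq\Sred(M)\otimes_K\Sred(M')$ extracted from the sphere-removal argument of Lemma~\ref{annulationspherique}.
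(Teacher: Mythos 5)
Your genus-one computation is correct and coincides with the paper's: the Hopf pairing matrix is $(-1)^{a+b}[(a+1)(b+1)]$, and its invertibility via the discrete sine orthogonality is exactly the paper's identity $\Pi^2=\frac{-2r}{(A^2-A^{-2})^2}\id$. The gap is in your reduction to genus one. The connect-sum spheres of the standard splitting $(S^3,\Sigma_g)=(S^3,\Sigma_1)\#\cdots\#(S^3,\Sigma_1)$ meet the Heegaard surface, so each one intersects $H$ in a properly embedded separating \emph{disc}, not in a sphere lying in the interior of $H$. Lemma \ref{annulationspherique} lets you remove intersections with an interior sphere because $1_n=P_n$ holds in the spherical algebra $TS_n$; there is no analogous relation for a properly embedded disc, and Lemma \ref{generateurs} only reduces the intersection with a dual disc to at most $r-2$ points, not to zero. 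Consequently $\Sred(H)$ does \emph{not} factor as $\Sred(H_1)^{\otimes g}$ over the boundary connected sum, and the Gram matrix is not a tensor power of the genus-one matrix. A dimension count makes this concrete: by the Verlinde formula $\dim\Sred(H_2)=\frac{r(r^2-1)}{6}$, which equals $20$ for $r=5$, while $(\dim\Sred(H_1))^2=(r-1)^2=16$; equivalently, the $r$-admissible colorings of a theta graph do not form a product set. The multiplicativity $\Sred(M\#M')\simeq\Sred(M)\otimes\Sred(M')$ you invoke is legitimate for the closed ambient manifold, but not for the two handlebodies of the splitting.

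The paper bridges the genera differently, and you could graft its step onto your outline. It uses the invertibility of the genus-one matrix $\Pi$ only to manufacture elements $t_i$ of the skein algebra of the annulus with the projector property that encircling $f_j$ by $t_i$ yields $\delta_{ij}f_j$. Inserting $t_{c(e)}$ along the curves $\delta_e\subset H'$ bounding the discs dual to the edges of a spine $\Gamma$ of $H$ produces, for each $r$-admissible coloring $c$, an element $\Delta_c\in\Sred(H')$ with $\langle\Gamma_{c'},\Delta_c\rangle_{H,H'}=\delta_{cc'}$ times the evaluation of $\Gamma_c$ in $S^3$, which is a nonzero product of theta coefficients. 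This exhibits an explicit quasi-dual family and shows directly that no combination of the $\Gamma_c$ lies in the radical, with no need to decompose $\Sred(H)$ at all.
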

\begin{proof}
We first prove the case when $H$ has genus 1, the general case will follow easily. 
If $S^3=H\cup H'$ with $H,H'$ of genus 1, then they form a Hopf link as in the left hand side of Figure \ref{hopf}. Take  standard banded links $\Gamma$ and $\Gamma'$ in $H$ and $H'$ such that the handlebodies retract on them. The vectors $(\Gamma_i)$ and $(\Gamma'_j)$ for $i,j\in \{0,\ldots,r-2\}$ form a basis of $\Sred(H)$ and $\Sred(H')$ respectively so that we have to prove that the matrix $\Pi_{ij}=\langle \Gamma_i,\Gamma_j\rangle_{H,H'}$ is non-degenerate. This will follows from an explicit computation that we explain now.

\begin{figure}[htbp]
\begin{center}
 \def\svgwidth{10cm}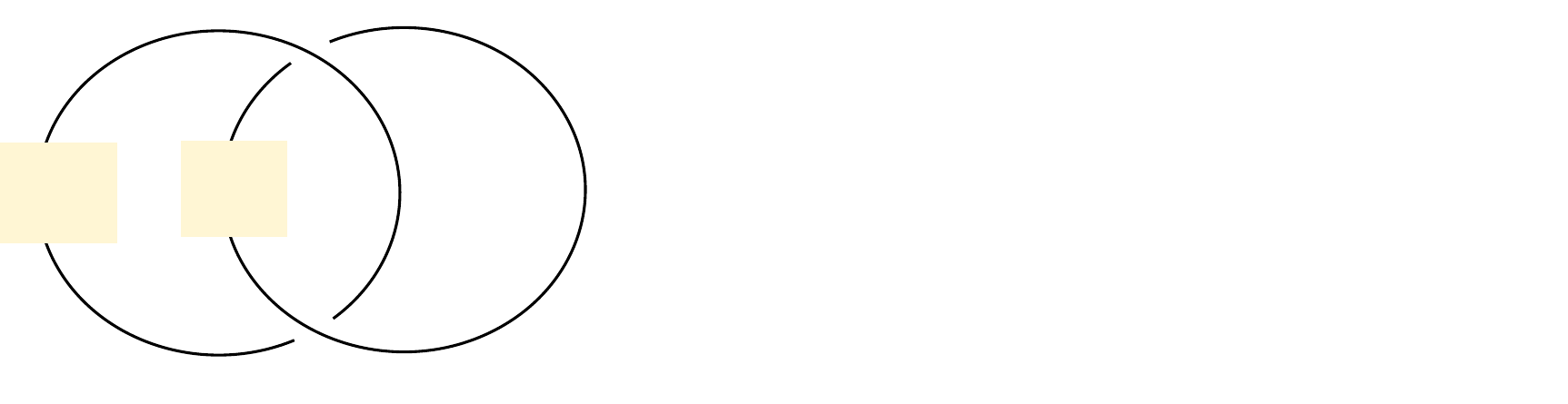
\caption{Hopf link and a generalised Theta graph}\label{hopf}
\end{center}
\end{figure}

From Lemma \ref{cerclage}, we know that encircling $f_i$ with $z=\hat{f_1}$ multiplies it by $-A^{2i+2}-A^{-2i-2}$. As $\hat{f}_j=S_j(z)$, encircling $f_i$ with $f_j$ will multiply it with $S_j(-A^{2i+2}-A^{-2i-2})=(-1)^j\frac{A^{2(j+1)(i+1)}-A^{-2(i+1)(j+1)}}{A^{2i+2}-A^{-2i-2}}$. This gives finally $\Pi_{ij}=(-1)^{i+j}[(i+1)(j+1)]$. This matrix is a kind of discrete Fourier transform: in particular, one computes 
$$\Pi^2=\frac{-2r}{(A^2-A^{-2})^2}\id.$$
This shows that $\Pi$ is invertible, concluding the case $g=1$. 

Let $S$ be a standard annulus. The preceding computation shows that for all $i\in \{0,\ldots, r-2\}$, there exists $t_i\in \Sred(S)$ such that encircling $f_j$ with $t_i$ yields $\delta_{ij}f_j$.  Let $H$ be a standard genus $g$ handlebody in $S^3$ and $\Gamma$ be a banded trivalent graph obtained by duplicating the central edge of a theta graph as in the right hand side of Figure \ref{hopf}. Denote by $\delta_e\subset H'$ the boundary of the dual disc of $e$. For any $r$-admissible coloring of $\Gamma$, consider the element $\Delta_c$ of $\Sred(H')$ obtained by inserting $t_{c(e)}$ along $\delta_e$ for all $e\in E(\Gamma)$. We observe that $\langle \Gamma_{c'},\Delta_c\rangle_{H,H'}$ is zero unless $c=c'$ and in that case it is equal to the evaluation of $\Gamma_c$ in $S^3$ which is non zero (as a product of Theta coefficients). We conclude that any combination of $\Gamma_c$ which is in the radical of the form $\langle \cdot,\cdot\rangle$ must vanish, hence the form $\langle\cdot,\cdot\rangle_{H,H'}$ is non-degenerate. 
\end{proof}

\subsection{The reduced skein module of a surface}
Let $H$ be a handlebody in $S^3$ such that its complement $H'$ is also a handlebody. We denote by $\Sigma$ their common boundary.  We take $i:\Sigma\times [0,1]\to S^3$ an embedding such that Im$(i)\cap H=\Sigma$. 

The usual gluing map defines an action of the algebra $\Sred(\Sigma)$ on $\Sred(H)$.  We will denote by $\Phi(L)$ the action of $L\in \Sred(\Sigma)$ : this is called the {\it curve operator} associated to $L$. The main result of this article is the following. 

\begin{theorem}
The natural map $\Phi:\Sred(\Sigma)\to \en(\Sred(H))$ is an isomorphism of algebras. 
\end{theorem}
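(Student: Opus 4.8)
The plan is to show that $\Phi$ is both injective and surjective. Since $\Sred(H)$ is finite dimensional, of dimension $n$ equal to the number of $r$-admissible colorings of a spine of $H$ (the $\Gamma_c$ form a basis by Proposition \ref{sesqui}), the target $\en(\Sred(H))\cong M_n(K)$ has dimension $n^2$, and these two properties suffice. That $\Phi$ is an algebra homomorphism is immediate: stacking two collars $\Sigma\times[0,1]$ simultaneously realizes the product in $\Sred(\Sigma)$ and the composition of the associated curve operators, so $\Phi(LL')=\Phi(L)\Phi(L')$, while the empty link acts as the identity.

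\textbf{Surjectivity.} I would first produce all diagonal matrix units. Realize the spine $\Gamma$ of $H$ as a graph in $\Sigma$, and for each edge $e$ let $\delta_e\subset\Sigma$ be the boundary of the dual disc $D_e$; pushed into the collar, $\delta_e$ becomes a curve onto which we may insert the elements $t_i\in\Sred(S)$ constructed in the proof of Proposition \ref{linking}, which satisfy that encircling $f_j$ by $t_i$ returns $\delta_{ij}f_j$. Inserting $t_{c(e)}$ along every $\delta_e$ then defines a curve operator which, in the orthogonal basis $(\Gamma_{c'})$, sends $\Gamma_{c'}$ to $\big(\prod_e\delta_{c(e),c'(e)}\big)\Gamma_{c'}$, i.e. the projector $E_{c,c}$. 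Thus $\Phi(\Sred(\Sigma))$ is a subalgebra $A\subseteq M_n(K)$ containing $1$ and every $E_{c,c}$, whence $A=\bigoplus_{(c,c')\in R}KE_{c,c'}$ for a relation $R$ that is reflexive and transitive (as $A$ is an algebra) and symmetric (as $A$ is stable under the anti-involution induced by $(x,t)\mapsto(x,1-t)$, which is the adjoint for the Hermitian form and swaps $E_{c,c'}$ with $E_{c',c}$). So $R$ is an equivalence relation and $A$ is block diagonal. To see there is a single block, I would connect any two admissible colorings by curve operators: a simple closed curve running parallel to a handle of $H$ acts, via Lemma \ref{tchebychev} and the fusion rules of Figure \ref{fusion}, by a Tchebychev multiplication whose tridiagonal matrix changes the corresponding color by $\pm1$; iterating such moves inside the admissible range joins any admissible coloring to any other. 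Hence the off-diagonal matrix units are reached and $A=M_n(K)$.

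\textbf{Injectivity.} Here I would use the non-degenerate linking form of Proposition \ref{linking}. Writing $S^3=H\cup(\Sigma\times[0,1])\cup H'$ and isotoping a collar skein towards either end, one gets for $v\in\Sred(H)$ and $w\in\Sred(H')$ the identity
$$\langle \Phi(L)v,\,w\rangle_{H,H'}=\mathrm{ev}_{S^3}\big(v\cup L\cup w\big),$$
so $\Phi(L)=0$ forces $\mathrm{ev}_{S^3}(v\cup L\cup w)=0$ for all $v,w$. To deduce $L=0$ it then suffices to know that this pairing between $\Sred(\Sigma)$ and $\Sred(H)\otimes\Sred(H')$ is non-degenerate in the first variable, equivalently that $\dim_K\Sred(\Sigma)\le n^2$. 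I would establish this bound by a two-sided reduction: an arbitrary multicurve in $\Sigma\times[0,1]$ meets the dual discs of $H$ near one end and those of $H'$ near the other, and applying the $(r-1)$-relation along these discs exactly as in Lemmas \ref{generateurs} and \ref{reduction} rewrites it in terms of a fixed family indexed by a pair of admissible colorings, one read off on each side. Combined with surjectivity, the matching dimensions then give that $\Phi$ is an isomorphism.

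The main obstacle is precisely this last bound $\dim_K\Sred(\Sigma)\le n^2$: unlike a handlebody, the collar $\Sigma\times[0,1]$ carries no intrinsic system of dual discs, so the two-sided reduction must be set up carefully and one must check that the resulting normal form is governed by pairs of admissible colorings and by nothing more. Everything else—the homomorphism property, the diagonal projectors, and the connectivity of admissible colorings—is comparatively routine given the fusion calculus already developed.
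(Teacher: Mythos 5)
Your overall architecture (prove surjectivity, then bound $\dim_K\Sred(\Sigma)$ by $n^2$) matches the paper's, and several ingredients are sound: the homomorphism property, the construction of the diagonal projectors from the elements $t_i$ of Proposition \ref{linking}, and the reduction of injectivity to a dimension count via the linking form. But there are two genuine gaps. The first is in surjectivity: your argument hinges on the unproved claim that the $\pm 1$ moves coming from curve operators along boundary components of $S_\Gamma$ connect any two $r$-admissible colorings. This is a real combinatorial statement (each such curve shifts the colors of \emph{all} the edges it traverses simultaneously, not one color at a time), and the paper itself, when it needs exactly this connectivity for the irreducibility result, proves it only for generalized Theta graphs and leaves the rest as an exercise. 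The paper's proof of the present theorem sidesteps connectivity entirely: non-degeneracy of the linking form between $H_1=(\Sigma\setminus D)\times[0,1]$ and its complement $H_2$ produces, for each pair of admissible colorings, an element $x\in\Sred(H_1)$ whose matrix coefficients $\langle \Psi(x)\Gamma_d,\Gamma_{d'}\rangle_{H,H'}$ are prescribed delta functions, which yields all matrix units at once.

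The second gap is the one you flag yourself, and the mechanism you propose does not repair it. The dual discs $D_e$ of $H$ and of $H'$ are not contained in $\Sigma\times[0,1]$: a banded link in the collar is disjoint from them and only meets the annuli $\partial D_e\times[0,1]$, to which the $(r-1)$-relation of Lemma \ref{generateurs} does not apply, so there is no ``two-sided reduction'' along those discs. If you instead treat a neighbourhood of $\Sigma\setminus D$ as a handlebody with its own spine and dual discs, Lemmas \ref{generateurs} and \ref{reduction} only give generation by admissible colorings of a genus-$2g$ graph, and that count strictly exceeds $n^2$ in general (for $g=1$, $r=5$ it is $20$ versus $n^2=16$). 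The missing relations come precisely from the fact that $\gamma=\partial D$ bounds a disc in $\Sigma$, so the encircling operator $T_1$ acts as the scalar $-(A^2+A^{-2})$ on the quotient $\Sred(\Sigma)$; the paper then computes the codimension of $\im(T_1+(A^2+A^{-2})\id)$ in $\Sred(H_1)$ as $\dim\ker(T_2+(A^2+A^{-2})\id)=n^2$, using adjointness of $T_1$ and $T_2$ with respect to the non-degenerate linking form together with the eigenvalue computation of Lemma \ref{cerclage}. Without some substitute for this step your proof of injectivity is incomplete, and since your surjectivity argument is also conditional, the proposal as it stands does not establish the theorem.
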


\begin{proof}

Let $D$ be a disc embedded in $\Sigma$ and set $H_1=\Phi((\Sigma\setminus D) \times [0,1])$ and $H_2$ its complement in $S^3$. We can find trivalent banded graphs $\Gamma_1\subset H_1$ and $\Gamma_2\subset H_2$ on which they retract by deformation, and we can suppose that there is a unique edge $e\in E(\Gamma_2)$ which intersect $D$ in one point, see Figure \ref{toretroue}. 

\begin{figure}[htbp]
\begin{center}
 \def\svgwidth{7cm}
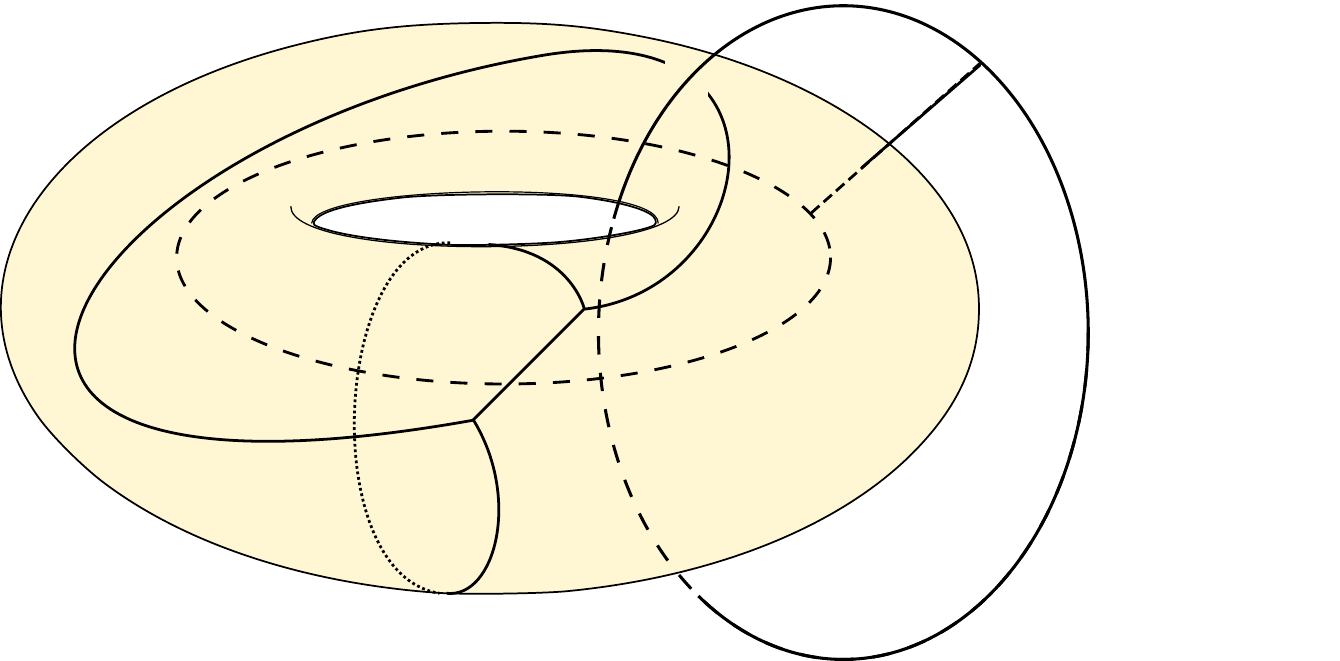
\caption{The graphs $\Gamma_1$ and $\Gamma_2$}\label{toretroue}
\end{center}
\end{figure}

As any graph in $\Sigma\times[0,1]$ can be made disjoint from $D\times[0,1]$ up to isotopy, the natural map $\Psi:\Sred(H_1)\to \Sred(\Sigma)$ is surjective. 
Denote by $\gamma$ the boundary of the disc $D$. It may be pushed into $H_1$ or $H_2$, defining two endomorphisms $T_1,T_2$ acting respectively on $\Sred(H_1)$ and $\Sred(H_2)$. These endomorphisms are adjoint with respect to the linking form of $H_1$ and $H_2$ i.e. 
\begin{equation}\label{adjoint}\langle T_1(x),y\rangle_{H_1,H_2}=\langle x,T_2(y)\rangle_{H_1,H_2}\text{ for all }x\in \Sred(H_1)\text{ and }y\in \Sred(H_2).
\end{equation}
 
As $\gamma$ bounds a disc in $\Sigma$, we see that $\Psi(T_1(x))=-(A^2+A^{-2})\Psi(x)$ for all $x\in \Sred(H_1)$. Thanks to Formula \ref{adjoint}, the subspace $\im(T_1+(A^2+A^{-2})\id)$ is the orthogonal of $\ker(T_2+(A^2+A^{-2})\id)$ with respect to the linking form. But as $\gamma$ encircle the edge $e$ of $\Gamma_2$ we have $T_2((\Gamma_2)_c)=(-A^{2c(e)+2}-A^{-2c(e)-2})(\Gamma_2)_c$. It follows that $\ker(T_2+(A^2+A^{-2})\id)$ is generated by graphs $\Gamma_2$ with a $r$-admissible coloring $c$ satisfying $c(e)=0$. Such colorings are in bijection with colorings of $\Gamma_2\setminus e$ which is the disjoint union of two graphs $\Gamma,\Gamma'$ on which $H$ and $H'=S^3\setminus H$ retract respectively. 
We conclude that we have the inequality $\dim \Sred(\Sigma)\le \dim \Sred(H\amalg H')=\dim \Sred(H)\dim \Sred(H')=\dim \en \Sred(H)$. 

Reciprocally, the non-degeneracy of $\langle \cdot,\cdot\rangle_{H_1,H_2}$ tells us the following. Given $r$-admissible colorings $c,c'$ on $\Gamma$ and $\Gamma'$, there exists $x\in \Sred(H_1)$ such that $\langle x,\Gamma_d\cup \Gamma_{d'}\rangle_{H_1,H_2}=\delta_{cc'}\delta_{dd'}$ for all $d,d'$. But $\langle x,\Gamma_d\cup\Gamma_{d'}\rangle_{H_1,H_2}=\langle \Psi(x)\Gamma_d,\Gamma_{d'}\rangle_{H,H'}$. As $\langle \cdot,\cdot\rangle_{H,H'}$ is also non-degenerate, we conclude that $\Psi$ is surjective and the theorem follows. 
\end{proof}

Let us derive from this theorem a formula for the action of Dehn twists. Let $f\in \mcg(H)$ be a diffeomorphism of $H$ preserving $\partial H=\Sigma$. It induces a diffeomorphism of $\Sigma$ that we denote by the same letter. By naturality of the action of $\Sred(\Sigma)$ on $\Sred(H)$,  we have the following relation: 
$$\Phi(f(x))(f(y))=f(\Phi(x)(y)),\quad \forall x\in\Sred(\Sigma), \forall y \in \Sred(H).$$
Stated differently, this gives $\Phi(f(x))=f\circ \Phi(x)\circ f^{-1}$, and hence $\rho(f)=f$. This shows that the representation $\rho$ restricted to $\mcg(H)$ coincides with the natural action of $\mcg(H)$ on $\Sred(H)$. In particular, it is linear, not only projective. 

Suppose that $\Gamma$ is a trivalent banded graph embedded in $H$ as usual. Then, for any edge $e$ of $\Gamma$ and dual disc $D_e$, the Dehn twist along $D_e$ is an element of $\mcg(H)$ which restricts on the boundary to the Dehn twist on $\partial D_e$. 
Using the formula for $z_n$ in Lemma \ref{cerclage}, we get immediately the following proposition. 

\begin{proposition}
For any edge $e$ of $\Gamma$, the Dehn twist $t_e$ along the dual disc $D_e$ acts diagonally on the standard basis of $\Sred(H)$. More precisely, we have $$t_e(\Gamma_c)=(-1)^{c(e)}A^{c(e)(c(e)+2)}\Gamma_c.$$
\end{proposition}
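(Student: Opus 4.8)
The plan is to reduce the global statement to the purely local skein computation already recorded in Lemma~\ref{cerclage}, namely $\epsilon(z_n)=(-1)^nA^{n(n+2)}$. Recall first that, by the discussion preceding the proposition, $t_e\in\mcg(H)$ acts \emph{linearly} on $\Sred(H)$ via the natural action of a diffeomorphism of $H$ on banded links. Thus $t_e(\Gamma_c)$ is the skein class obtained by applying the diffeomorphism $t_e$ to a banded graph representing $\Gamma_c$, and it suffices to identify this class.

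First I would localize the action of $t_e$. The disc $D_e$ is properly embedded in $H$ and meets $\Gamma$ transversally in the single point $\Gamma\cap e$; the Dehn twist $t_e$ can be realized by cutting $H$ along $D_e$, applying a full $2\pi$ rotation, and regluing, so that $t_e$ is supported in a collar $D_e\times[-1,1]$ and is the identity elsewhere up to isotopy. In the skein expansion $\Gamma_c$, the idempotent $f_{c(e)}$ is inserted along $e$, so within this collar $\Gamma_c$ is exactly a band of $c(e)$ parallel strands carrying $f_{c(e)}$ and meeting $D_e$ once; away from the collar $\Gamma_c$ is untouched. Consequently $t_e(\Gamma_c)$ agrees with $\Gamma_c$ everywhere except that a single full positive twist of the ribbon has been inserted into the $f_{c(e)}$-band along $e$.

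It remains to evaluate this twisted band in $\Sred(H)$. By the absorption property $f_nx=\epsilon(x)f_n$ for $x\in T_n$, the full twist applied to the $f_{c(e)}$-band is precisely the element $z_{c(e)}$ of Lemma~\ref{cerclage} (the ``twisting'' picture of Figure~\ref{cercle}), which equals $\epsilon(z_{c(e)})f_{c(e)}$. Substituting $\epsilon(z_{c(e)})=(-1)^{c(e)}A^{c(e)(c(e)+2)}$ and reassembling the rest of the graph, which is unchanged, yields
$$t_e(\Gamma_c)=(-1)^{c(e)}A^{c(e)(c(e)+2)}\,\Gamma_c.$$
In particular $t_e$ multiplies each basis vector $\Gamma_c$ by a scalar, so it acts diagonally on the standard basis.

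The main obstacle is the geometric identification in the second step: one must check carefully that the Dehn twist along the properly embedded disc $D_e$ transforms the \emph{banded} graph $\Gamma_c$ by exactly one full twist of the $f_{c(e)}$-ribbon (tracking the framing, not merely the underlying link) and that no other portion of $\Gamma_c$ enters the support of the twist. Once this framing bookkeeping is settled, the evaluation is immediate from Lemma~\ref{cerclage}.
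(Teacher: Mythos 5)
Your proof is correct and follows exactly the paper's (very terse) argument: the paper likewise observes that $t_e\in\mcg(H)$ acts by the natural linear action, that the twist is supported near $D_e$ where $\Gamma_c$ is just the $f_{c(e)}$-band, and then quotes $\epsilon(z_{c(e)})=(-1)^{c(e)}A^{c(e)(c(e)+2)}$ from Lemma~\ref{cerclage}. You have merely spelled out the localization and framing bookkeeping that the paper leaves implicit.
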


\section{Further topics}

\subsection{Explicit formulas}\label{sectionexplicit}
Suppose that $S^3=H\cup H'$ where $H$ and $H'$ are two handlebodies with common boundary $\Sigma$. We showed in the last section how the group $\mcg(H)$ acts on $\Sred(H)$. Roberts approach of the quantum representation is to use the duality between $\Sred(H)$ and $\Sred(H')$ given by the linking form to get an action of $\mcg(H')$ on $\Sred(H)$, see \cite{skeinroberts}. As $\mcg(H)$ and $\mcg(H')$ generate the mapping class group (see \cite{fm}, Chap. 4), this is enough to compute the representation. Unfortunately, the formulas for the linking form are intractable in practice as soon as the genus of $H$ is greater than $1$. 

We explain here a more efficient way for computing the quantum representation. 
Suppose that $(\Gamma,S_\Gamma)$ is a banded trivalent graph such that $H=S_\Gamma\times [0,1]$ and such that $S_\Gamma$ has genus $0$ and $\Gamma$ has no disconnecting edge. This implies that each boundary component $\gamma$ of $S_\Gamma$ passes at most once along each edge of $\Gamma$ (under the retraction of $S_\Gamma$ on $\Gamma$). We will identify the component $\gamma$ with the corresponding subgraph of $\Gamma$.
Using simple fusion rules of Lemma \ref{fusion}, we observe that the endomorphism $\Phi(\gamma)$ has a simple ``tridiagonal" expression in the standard basis. 
Formally, we have the following lemma:
\begin{lemma}\label{explicit}
For any component $\gamma$ of $\partial S_\Gamma$ as above, one can write for any $r$-admissible coloring $c:E(\Gamma)\to\{0,\ldots,r-2\}$
$$\Phi(\gamma)(\Gamma_c)=\sum_{\xi:E(\gamma)\to \{\pm 1\}} F(c,\xi)\Gamma_{c+\xi}$$
where $F(c,\xi)\ne 0$ if and only if $c+\xi$ is $r$-admissible. 
\end{lemma}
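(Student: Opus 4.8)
The plan is to compute $\Phi(\gamma)(\Gamma_c)$ by isotoping $\gamma$ onto the graph and fusing it into $\Gamma_c$ edge by edge, using only the local moves of Figure \ref{fusion}. Because $S_\Gamma$ has genus $0$ and $\Gamma$ has no disconnecting edge, the boundary component $\gamma$ can be pushed onto $S_\Gamma$ so that it runs once and parallel to each edge of a cyclic sequence $E(\gamma)\subset E(\Gamma)$, meeting each vertex in at most one corner and thus running along exactly two of the three edges incident there (the third keeping its color). Hence $\Phi(\gamma)(\Gamma_c)$ is represented by the skein in which, along every $e\in E(\gamma)$, a single color-$1$ strand runs beside the idempotent $f_{c(e)}$, while the edges outside $E(\gamma)$ are untouched.

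First I would absorb $\gamma$ along the edges. Beside each $f_{c(e)}$ the parallel color-$1$ strand is fused using the recursion for the Jones--Wenzl idempotents together with the first fusion rule of Figure \ref{fusion}; this rewrites it as a sum of two terms carrying effective color $c(e)+1$ and $c(e)-1$ on $e$ (the idempotent-level Tchebychev relation $z\hat f_{c(e)}=\hat f_{c(e)+1}+\hat f_{c(e)-1}$). Choosing independently on each edge yields the announced sum over $\xi:E(\gamma)\to\{\pm1\}$, the color along $e$ becoming $c(e)+\xi(e)$. Next I would clear the vertices: at a vertex $v$ where $\gamma$ turns from $e$ to $e'$ (the third edge $e''$ keeping color $c(e'')$), the remaining arc of $\gamma$ crosses the trivalent vertex and the vertex fusion rules of Figure \ref{fusion} reduce the picture to the standard colored vertex on $(c(e)+\xi(e),\, c(e')+\xi(e'),\, c(e''))$ times a scalar that is a ratio of quantum integers. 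Multiplying the edge and vertex scalars over all of $E(\gamma)$ and over the vertices met by $\gamma$ produces a coefficient $F(c,\xi)$ and a skein equal to $\Gamma_{c+\xi}$, which is the claimed expansion.

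It remains to identify the non-vanishing locus of $F$. Parity is automatic, since at each vertex met by $\gamma$ exactly two incident colors change, each by $\pm1$, so their sum stays even; hence admissibility of $c+\xi$ reduces to the range condition $0\le c(e)+\xi(e)\le r-2$ together with the triangle inequalities and the sum bound $(c(e)+\xi(e))+(c(e')+\xi(e'))+c(e'')<2r-2$ at the vertices met by $\gamma$. I would then match each failure to a vanishing: a color dropping to $-1$ simply produces no fusion term; a color rising to $r-1$ inserts $f_{r-1}$ along an edge and so vanishes in $\Sred(H)$ by the defining $(r-1)$-relation; a sum reaching $2r-2$ yields an over-full triad, which vanishes by the lemma on triples with $a+b+c\ge 2r-2$; and a broken triangle inequality forces a negative internal color at $v$, so the vertex cannot be joined without back-tracking and its coefficient is $0$. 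Conversely, when $c+\xi$ is admissible, $F(c,\xi)$ is a product of ratios of quantum integers $[n]$ with $0<n<r$ and of admissible theta coefficients $\langle a,b,c\rangle$, all nonzero by the preceding lemmas.

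The main obstacle is the vertex bookkeeping in the second paragraph: one must check that the moves of Figure \ref{fusion} genuinely push the color-$1$ strand through a trivalent vertex to a single admissible vertex with an explicit scalar, and that this scalar vanishes on exactly the configurations excluded above --- in particular that the $c(e)=r-1$ case is governed by the $(r-1)$-relation and the $a+b+c=2r-2$ case by the vanishing-triad lemma, rather than by some accidental cancellation among quantum integers. Alternatively, since $(\Gamma_c)$ is an orthogonal basis by Proposition \ref{sesqui}, one could pin down $F(c,\xi)$ as $\langle \Phi(\gamma)\Gamma_c,\Gamma_{c+\xi}\rangle/\langle\Gamma_{c+\xi},\Gamma_{c+\xi}\rangle$ and reduce the non-vanishing to evaluations of theta and tetrahedral coefficients.
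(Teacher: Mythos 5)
Your proposal follows essentially the same route as the paper: isotope $\gamma$ to run parallel to the edges of the corresponding subgraph, absorb the colour-$1$ strand using the recursion for the Jones--Wenzl idempotents and the fusion rules of Figure \ref{fusion}, and read off the $\pm 1$ shifts edge by edge — exactly the computation the paper carries out explicitly for the Theta graph. The paper leaves the general bookkeeping and the characterisation of the vanishing of $F(c,\xi)$ as an observation, and your case analysis (coefficient $[0]$ at a colour dropping below $0$ or at a failed triangle inequality, the $(r-1)$-relation at a colour reaching $r-1$, the vanishing-triad lemma at sum $2r-2$) correctly fills in what is implicit there.
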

The coefficients $F(c,\xi)$ are cumbersome quotients of quantum integers. Let us look at the example where $H$ is a handlebody of genus 2 which retracts on a Theta graph $\Gamma$. Denote by $\Gamma_{a,b,c}$ the skein expansion of $\Gamma$ with colors $a,b,c$ and by $\gamma$ the curve passing along the edges colored by $a$ and $b$ as in Figure \ref{Theta}.

\begin{figure}[htbp]
\begin{center}
 \def\svgwidth{12cm}
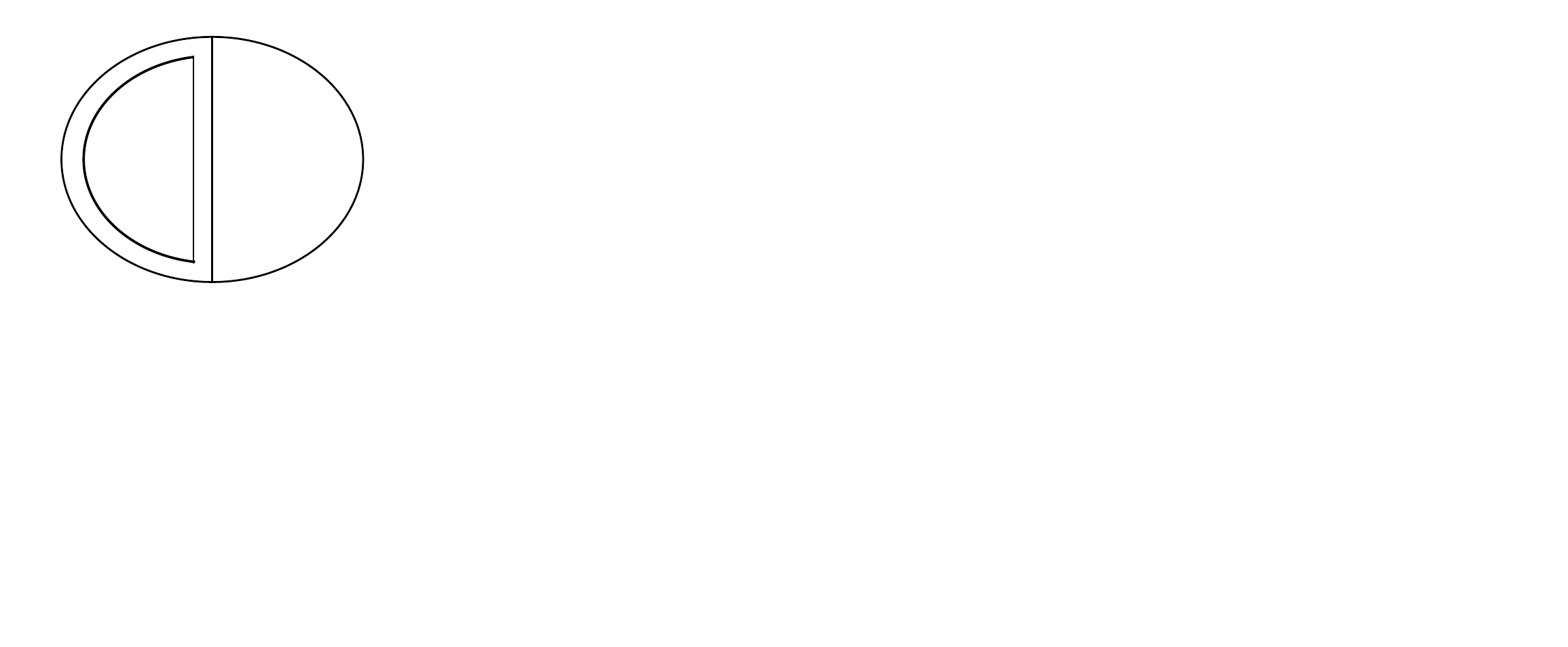
\caption{Curve operator on a genus $2$ surface}\label{Theta}
\end{center}
\end{figure}
Reducing the graph using fusion rules, we get the following formula for $\Phi(\gamma)$. 
\begin{equation*}
\begin{aligned}
\Phi(\gamma)(\Gamma_{a,b,c})=&\Gamma_{a+1,b+1,c}-\frac{[(c+a-b)/2]^2}{[a][a+1]}\Gamma_{a-1,b+1,c}-\frac{[(c+b-a)/2]^2}{[b][b+1]}\Gamma_{a+1,b-1,c}\\
&+\frac{[(a+b+c)/2+1]^2[(a+b-c)/2]^2}{[a][a+1][b][b+1]}\Gamma_{a-1,b-1,c}.
\end{aligned}
\end{equation*}
If $\gamma$ were bounding a dual disc $D_e$, we would have instead $\Phi(\gamma)(\Gamma_c)=-(A^{2c(e)+2}+A^{-2c(e)+2})\Gamma_c$ and $t_\gamma (\Gamma_c)=(-1)^{c(e)}A^{c(e)(c(e)+2)}$. This shows that given a polynomial $Q\in K[X]$ satisfying $Q(-A^{2n+2}-A^{-2n-2})=(-1)^nA^{n(n+2)}$ for all $n\in \{0,\ldots r-2\}$ we have the practical formula: 
$$\rho(t_\gamma)=Q(\Phi(\gamma)).$$
As all Dehn twists are conjugate by the mapping class group to a Dehn twist bounding a dual disc, the above formula is always true and can be used to compute the image of Dehn twists from the expression of Lemma \ref{explicit}. This algorithm is already implemented in \cite{tqft.gp}. The tridiagonal form of the curve operator $\Phi(\gamma)$ generalises to all elements of $\Sred(\Sigma)$. This gives a reinterpretation of the curve operators as Toeplitz operators, a key ingredient for understanding the semi-classical properties of the quantum representations, see \cite{mp,detcherry}. 

\subsection{Irreducibility}

%

Let us use the formulas of the last section to reprove the following theorem of Roberts (see \cite{roberts}).

\begin{proposition}
Let $r$ be an odd prime. The quantum projective representation of $\mcg(\Sigma)$ on $\Sred(H)$ is irreducible. 
\end{proposition}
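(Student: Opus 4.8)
The plan is to show that the commutant of the image $\rho(\mcg(\Sigma))$ acting on $\Sred(H)$ consists only of scalars; since $\rho$ is a projective representation into $\PGL$, this is equivalent to projective irreducibility. The strategy is to exploit the two families of operators we have made completely explicit: the Dehn twists $t_e$ along dual discs, which act \emph{diagonally} on the standard basis $(\Gamma_c)$ by the eigenvalue $(-1)^{c(e)}A^{c(e)(c(e)+2)}$, and the curve operators $\Phi(\gamma)$ of Lemma \ref{explicit}, which are \emph{tridiagonal} and connect a basis vector $\Gamma_c$ to all $\Gamma_{c+\xi}$ with $c+\xi$ still $r$-admissible. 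An endomorphism commuting with all of $\rho(\mcg(\Sigma))$ must in particular commute with every twist, and the nonzero off-diagonal coefficients $F(c,\xi)$ then force it to respect the ``graph" connecting admissible colorings. The argument thus splits into two parts: first, commuting with the diagonal twists pins down the structure of any intertwiner; second, commuting with enough curve operators $\Phi(\gamma)$ forces it to be scalar.

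First I would record that the representation contains enough Dehn twists $t_e$ so that the tuple of eigenvalues $\big((-1)^{c(e)}A^{c(e)(c(e)+2)}\big)_{e\in E(\Gamma)}$ separates the $r$-admissible colorings $c$. This is exactly where $r$ being an odd prime enters: one needs the map $n\mapsto (-1)^nA^{n(n+2)}$ to be injective on $\{0,\ldots,r-2\}$, which holds because $A$ is a primitive $4r$-th root of unity and the quadratic $n(n+2)=(n+1)^2-1$ takes distinct values modulo the relevant order when $r$ is prime. Granting injectivity, any operator $C$ commuting with all the $t_e$ must be diagonal in the basis $(\Gamma_c)$: the joint eigenspaces of the commuting diagonalizable family $\{t_e\}$ are exactly the lines $K\cdot\Gamma_c$. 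So the commutant of the twist subgroup is already reduced to diagonal matrices $C\,\Gamma_c=\mu(c)\,\Gamma_c$.

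Next I would use the curve operators to show $\mu$ is constant. Since $\mcg(\Sigma)$ acts by conjugation and $\mcg(H)\subset\mcg(\Sigma)$ acts by honest automorphisms, the curve operators $\Phi(\gamma)$ and the twists are all available as genuine linear operators on $\Sred(H)$ up to scalars, so $C$ must commute with each $\Phi(\gamma)$ as well. Writing out $C\Phi(\gamma)=\Phi(\gamma)C$ on a basis vector and comparing the $\Gamma_{c+\xi}$-coefficients gives, for every $\xi$ with $F(c,\xi)\neq0$, the relation $\mu(c+\xi)F(c,\xi)=F(c,\xi)\mu(c)$, hence $\mu(c+\xi)=\mu(c)$ whenever $c+\xi$ is $r$-admissible. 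The final step is a connectivity argument: I would check that as $\gamma$ ranges over the curves of the type in Lemma \ref{explicit}, the moves $c\mapsto c\pm\xi$ act transitively on the set of all $r$-admissible colorings, so $\mu$ is forced to be globally constant, i.e. $C$ is scalar.

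The main obstacle I expect is the last connectivity claim together with the injectivity of the eigenvalue separation. For connectivity one must verify that the allowed increment vectors $\xi$ really do let one pass between any two admissible colorings without leaving the admissible region, which is a combinatorial statement about the polytope of admissible colorings of $\Gamma$ and may require choosing the embedded graph $\Gamma$ carefully (the genus-$0$, no-disconnecting-edge presentation of Section \ref{sectionexplicit} is the natural one, since there the curves $\gamma$ realize all the needed edge-increments). The number-theoretic injectivity step is delicate precisely at small or composite $r$, and is the place where the hypothesis that $r$ is an \emph{odd prime} is genuinely used; I would isolate it as a short lemma about the values of $(-1)^nA^{n(n+2)}$ for $A$ a primitive $4r$-th root of unity.
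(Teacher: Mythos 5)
Your overall strategy coincides with the paper's: the injectivity of $n\mapsto(-1)^nA^{n(n+2)}$ on $\{0,\ldots,r-2\}$ (which is indeed where the primality of $r$ enters, and which the paper isolates as exactly the auxiliary lemma you propose), diagonality of any intertwiner obtained from the Dehn twists along dual discs, propagation of the diagonal entries via the tridiagonal curve operators of Lemma \ref{explicit}, and a final connectivity argument on admissible colorings (which the paper also only sketches, by induction on a generalised Theta graph).

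The one step that does not hold up as written is the passage from ``$C$ commutes with $\rho(\mcg(\Sigma))$'' to ``$C$ commutes with $\Phi(\gamma)$''. Your justification --- that the curve operators are ``available as genuine linear operators up to scalars'' --- proves nothing: by the main theorem $\Phi$ maps $\Sred(\Sigma)$ \emph{onto} $\en(\Sred(H))$, so commuting with every curve operator is essentially the conclusion you are trying to reach, not something you may assume; and $\Phi(\gamma)$ is not in the image of $\rho$. What is in the image (up to scalar) is $\rho(t_\gamma)=Q(\Phi(\gamma))$ for the interpolation polynomial $Q$ of Section \ref{sectionexplicit}. The repair is a second application of your own injectivity lemma: $Q$ sends the eigenvalue $-A^{2n+2}-A^{-2n-2}$ of $\Phi(\gamma)$ to $(-1)^nA^{n(n+2)}$ injectively, and $\Phi(\gamma)$ is diagonalizable (conjugate $\gamma$ to a dual-disc curve of some pants decomposition), so $\rho(t_\gamma)$ and $\Phi(\gamma)$ have the same eigenspace decomposition; hence $\Phi(\gamma)$ is in turn a polynomial in $\rho(t_\gamma)$ and anything commuting with the twist commutes with the curve operator. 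This is precisely how the paper argues, and with that repair your proof becomes the paper's proof.
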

\begin{proof}
We start with the following lemma. 
\begin{lemma} 
For all $n,m\in \{0,\ldots,r-2\}$, $(-1)^n A^{n(n+2)}=(-1)^m A^{m(m+2)}$ if and only if $n=m$.
\end{lemma}
\begin{proof}
Writing $-1=A^{2r}$, the equality is equivalent to $2nr+n(n+2)=2mr+m(m+2)$ modulo $4r$ or $4r|(n-m)(2r+n+m+2)$. If $n\ne m$, $r$ should divide $2r+n+m+2$. As $2\le n+m+2\le 2r-2$, we have $n+m+2=r$ and the condition is $4r|(n-m)3r$. Hence $n$ and $m$ have the same parity which contradicts the previous equality. 
\end{proof}
As a consequence of this lemma, we find that $\rho(t_\gamma)$ and $\Phi(\gamma)$ have exactly the same eigenspaces, hence an element $\Psi\in \en(\Sred(H))$ which commutes with $\rho(t_\gamma)$ for all simple curves $\gamma$ also commutes with $\Phi(\gamma)$. Consider such an endomorphism. 
As $\Psi$ should preserve the eigenspaces of $\Phi(\gamma_e)$ for all edges $e$, it must be diagonal in the basis $(\Gamma_c)$. Hence we can write $\Phi(\Gamma_c)=\lambda_c\Gamma_c$. Using the formula of Lemma \ref{explicit}, the commutation of $\Psi$ and $\Phi(\gamma)$ for $\gamma$ a component of $\partial S_\Gamma$ reads $F(c,\xi)(\lambda_c-\lambda_{c+\xi})=0$ for all $\xi:E(\gamma)\to \{\pm 1\}$ hence $\lambda_c=\lambda_{c+\xi}$ if both $c$ and $c+\xi$ are $r$-admissible. 

It remains to show that this implies that $\lambda_c$ is constant. It is easy to show by induction for a generalised Theta graph: we left it as an exercise for the reader. 
\end{proof}

\subsection{Integral structure}
Let $r$ be an odd prime and set as usual $K=\Q[A]/\phi_{4r}(A)$ the cyclotomic field of order $4r$. We denote by $\O$ the ring of integers of $K$. We will make use of the following lemma.
\begin{lemma}
The quantum integers $[1],[2],\ldots,[r-1]$ are units of $\O$. 
\end{lemma}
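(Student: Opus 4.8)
The plan is to express each $[n]$ as a product of units by relating it to cyclotomic units in the subfield of $K$ generated by a primitive $r$-th root of unity. First I would record the orders of the relevant powers of $A$: since $A$ is a primitive $4r$-th root of unity, $\zeta:=A^2$ has exact order $2r$, while $\omega:=A^4$ has exact order $4r/\gcd(4,4r)=r$ (here $r$ odd is used, so that $\gcd(4,4r)=4$). Thus $\omega$ is a primitive $r$-th root of unity and $\Q(\omega)\subseteq K$. A direct manipulation of the defining formula then gives
$$[n]=\frac{A^{2n}-A^{-2n}}{A^2-A^{-2}}=\zeta^{1-n}\,\frac{\omega^{n}-1}{\omega-1},$$
so it suffices to prove that each of the two factors on the right is a unit of $\mathcal{O}$.

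The factor $\zeta^{1-n}=A^{2(1-n)}$ is a root of unity, hence a unit. For the second factor I would invoke the classical fact that, $r$ being prime and $1\le n\le r-1$ so that $\gcd(n,r)=1$, the element
$$\frac{\omega^{n}-1}{\omega-1}=1+\omega+\cdots+\omega^{n-1}\in\mathcal{O}$$
is a cyclotomic unit. To keep the argument self-contained I would exhibit its inverse as an algebraic integer: choosing $m$ with $nm\equiv 1\pmod r$ and setting $\eta:=\omega^{n}$, which is again a primitive $r$-th root of unity with $\omega=\eta^{m}$, one obtains
$$\frac{\omega-1}{\omega^{n}-1}=\frac{\eta^{m}-1}{\eta-1}=1+\eta+\cdots+\eta^{m-1}\in\mathcal{O}.$$
Since both $\frac{\omega^{n}-1}{\omega-1}$ and its inverse lie in $\mathcal{O}$, this factor is a unit, and therefore so is $[n]$ for every $n$ in the stated range.

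There is essentially no difficult step here: the entire content is the bookkeeping of the orders of $A^2$ and $A^4$ together with the standard cyclotomic-unit identity. The one point requiring care is that $\gcd(n,r)=1$ holds throughout $1\le n\le r-1$, which is precisely where the primality of $r$ enters; for composite $r$ those $[n]$ with $\gcd(n,r)>1$ need not be units, so primality cannot be dropped.
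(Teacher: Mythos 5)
Your proof is correct. The opening reduction is identical to the paper's: both set $q=A^4$ (your $\omega$) and factor $[n]=A^{2-2n}\,\frac{q^n-1}{q-1}$, reducing everything to showing that $1+q+\cdots+q^{n-1}$ is a unit. You diverge in how you establish that last fact. The paper works in $\Z[q]/(1+q+\cdots+q^{r-1})$ and argues by a Euclidean-algorithm/gcd computation parallel to the one computing $\gcd(r,n)=1$ in $\Z$, which yields a B\'ezout-type certificate of invertibility; you instead use the classical cyclotomic-unit device, choosing $m$ with $nm\equiv 1\pmod r$ and exhibiting the inverse in closed form as $1+\eta+\cdots+\eta^{m-1}$ with $\eta=\omega^n$. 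Your version is more self-contained and explicit (the paper's ``the situation is similar to'' step is left to the reader), while the paper's stays purely polynomial and never invokes the root-of-unity structure; both hinge on primality of $r$ only through $\gcd(n,r)=1$, exactly as you observe. One small slip: $\gcd(4,4r)=4$ for every $r$, odd or not, so $A^4$ has exact order $r$ unconditionally --- the oddness of $r$ is not what is being used in that parenthetical.
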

\begin{proof}
Write $q=A^4$: it satisfies $q^r=1$ and we have $[n]=A^{2-2n}\frac{q^n-1}{q-1}$. Hence it is sufficient to prove that $1+q+\cdots+q^{n-1}$ is a unit in the ring $\Z[q]/(1+q+\cdots+q^{r-1})$ for $n<r$. Hence we want to show that the greatest common divisor of $1+\cdots+q^{r-1}$ and $1+q+\cdots+q^{n-1}$ in $\Z[q]$ is $1$. When applying the Euclidean division algorithm, we find that the situation is similar to the computation of the greatest common divisor of $r$ and $n$ in $\Z$. As these numbers are coprime by hypothesis, the conclusion follows.
%
\end{proof}

This lemma and a simple induction implies that the Jones-Wenzl idempotents $f_0,\ldots,f_{r-1}$ have coefficients in $\O$, hence one can define an integral version $\Sred_\o(M)$ of the reduced skein module simply by taking the ground ring $R=\O$ instead of $K$. It clearly satisfies $\Sred_\o(M)\otimes K=\Sred(M)$. 


\begin{proposition}\label{finitude}
For any manifold $M$, $\Sred_\o(M)$ is a finitely generated $\O$-module, free if $M$ is a handlebody. 
\end{proposition}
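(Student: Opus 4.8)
The plan is to establish the handlebody case first and then deduce the general statement from it. For a handlebody $H$ with trivalent spine $\Gamma$, I claim that the family $(\Gamma_c)$, where $c$ ranges over $r$-admissible colorings, is an $\O$-basis of $\Sred_\o(H)$. Each $\Gamma_c$ already lies in $\Sred_\o(H)$, since the idempotents $f_0,\dots,f_{r-1}$ have coefficients in $\O$. To see that they span over $\O$, I would rerun the three reduction steps already used over $K$ --- the dual-disc reduction of Lemma \ref{generateurs}, the skein expansion of Lemma \ref{reduction}, and the passage to $r$-admissible colorings --- checking that every coefficient produced stays in $\O$. The idempotent insertions contribute $\O$-coefficients by the remark above; the fusion rules of Figure \ref{fusion} contribute quotients of quantum integers whose denominators lie among $[1],\dots,[r-1]$, hence are units of $\O$ by the preceding lemma, while the surviving numerators are quantum integers of index $<r$ and so lie in $\O$; and triads that fall outside the admissible range simply vanish, contributing no coefficient at all. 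Thus the whole reduction runs verbatim over $\O$, and the $\Gamma_c$ generate $\Sred_\o(H)$ as an $\O$-module.

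For linear independence I would invoke Proposition \ref{sesqui}: the $\Gamma_c$ are $K$-linearly independent in $\Sred_\o(H)\otimes K=\Sred(H)$. Since any $\O$-relation $\sum_c a_c\Gamma_c=0$ in $\Sred_\o(H)$ pushes forward to the same relation in $\Sred(H)$ with $a_c\in\O\subset K$, the $K$-independence forces all $a_c=0$. A finite generating set that is linearly independent is a basis, so $\Sred_\o(H)$ is free of finite rank, with the $r$-admissible colorings indexing its basis.

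For an arbitrary $M$ I would reduce to this case via a handle decomposition. Let $H$ be the union of the $0$- and $1$-handles (a handlebody, arranged to contain a collar of $\partial M$ so that $P$ and the tangle endpoints are visible in $H$); then $M$ is obtained from $H$ by attaching $2$- and $3$-handles. Each attachment induces a surjection on integral reduced skein modules: a banded link in $M$ can be isotoped off the cocore of a $2$-handle (a properly embedded arc, generically disjoint from a $1$-manifold) and off the center of a $3$-handle, after which the complements deformation-retract into the smaller manifold, so every generator already comes from there. Because the Kauffman and $(r-1)$-relations are local, these isotopy-induced maps descend to $\O$-linear surjections $\Sred_\o(H)\to\Sred_\o(M)$. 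As a surjective image of a finitely generated module is finitely generated, the freeness of $\Sred_\o(H)$ proven above shows that $\Sred_\o(M)$ is finitely generated over $\O$.

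The main obstacle is the integrality bookkeeping in the handlebody step: one must ensure that no reduction ever divides by a quantum integer of index $\ge r$ (which vanishes) or by any non-unit. This is exactly what restricting to $r$-admissible colorings buys --- at every vertex one has $(a+b+c)/2+1<r$, so all quantum integers that appear have index $<r$ --- combined with the lemma that $[1],\dots,[r-1]$ are units of $\O$. The remaining work is the routine verification of coefficients through Figure \ref{fusion}.
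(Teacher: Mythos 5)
Your proposal is correct and follows essentially the same route as the paper: reduce to the handlebody case via a surjection $\Sred_\o(H)\to\Sred_\o(M)$ from a handlebody containing all links up to isotopy, then rerun the reduction arguments of Lemmas \ref{generateurs} and \ref{reduction} over $\O$ using the integrality of the Jones--Wenzl idempotents (granted by the unit lemma for $[1],\dots,[r-1]$), and deduce linear independence from the already-established $K$-linear independence of the $\Gamma_c$. Your write-up is somewhat more explicit than the paper's about the coefficient bookkeeping and the handle-decomposition mechanism behind the surjection, but the ideas are the same.
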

\begin{proof}
One can assume that $M$ is connected and that there exists a handlebody $H\subset M$ such that any banded link in $M$ lives in $H$ up to isotopy. Hence the map $\Sred_\o(H)\to \Sred_\o(M)$ is surjective and it is sufficient to deal with the case when $M$ is a handlebody.

Pick a banded trivalent graph $\Gamma\subset S\subset M$ such that $M$ retracts on $S$ which retracts on $\Gamma$. As in Theorem \ref{structure_skein}, by solving the crossings of a diagram and removing trivial components, we observe that $\Sred_\o(\Sigma)$ is generated as an $\O$-module by simple links in $S$. We can then reproduce the proof of Lemma \ref{reduction} thanks to the integral properties of the idempotents. It follows that the standard basis $(\Gamma_c)$ is also a basis as an $\O$-module.  
\end{proof}

This proves that $\Sred_\o(\Sigma)$ is an order, preserved by the quantum representation. If the map $\Phi_\o:\Sred_\o(\Sigma)\to \en_\o \Sred_\o(H)$ were an isomorphism,  it would follow that the quantum representation preserves the lattice $\Sred_\o(H)\subset \Sred(H)$ (recall that by lattice we mean a finitely generated $\O$-submodule of $\Sred(H)$, hence without torsion). Although this is not the case, the experts will immediately see that the representation must preserve a lattice. For convenience, we provide a proof below. 

\begin{proposition}
Suppose that $\Sigma$ has genus $g\ge 3$. Then there exists a lattice $\Lambda\in \Sred(H)$ which is (projectively) preserved by $\mcg(\Sigma)$. 
\end{proposition}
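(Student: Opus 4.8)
The starting point is to make the algebraic data explicit. Write $V=\Sred(H)$ and $A=\en_K(V)$, so that the main theorem gives an isomorphism $\Phi\colon\Sred(\Sigma)\xrightarrow{\ \sim\ }A$ onto a matrix algebra. Set $\mathcal{A}=\Sred_\o(\Sigma)$, which by Proposition \ref{finitude} is an $\O$-order in $A$, and $\Lambda_0=\Sred_\o(H)$, a full $\O$-lattice in $V$. The integral curve-operator map $\Phi_\o\colon\Sred_\o(\Sigma)\to\en_\o(\Sred_\o(H))$ says precisely that $\Lambda_0$ is stable under $\mathcal{A}$, i.e. it is an $\mathcal{A}$-lattice; moreover $\mcg(H)$ already preserves $\Lambda_0$ linearly. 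Since $\mcg(\Sigma)$ preserves $\mathcal{A}$, each $f$ acts by an automorphism $\phi_f=\ad(g_f)$ of $A$, where $g_f\in\GL(V)$ is a lift of $\rho(f)$, well defined up to $K^\times$.

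The plan is to produce the desired lattice as a fixed point of the $\mcg(\Sigma)$-action on homothety classes of $\mathcal{A}$-lattices. First I would record that this set is finite. If $\Lambda,\Lambda'\subset V$ are $\mathcal{A}$-lattices that are isomorphic as $\mathcal{A}$-modules, then any isomorphism extends $K$-linearly to an element of $\en_A(V)=K$ (using that $\mathcal{A}$ spans $A$ and that $V$ is a simple $A$-module), whence $\Lambda'=c\Lambda$ for some $c\in K^\times$. Thus homothety classes of $\mathcal{A}$-lattices in $V$ coincide with isomorphism classes of $\mathcal{A}$-lattices embeddable in $V$, and the Jordan--Zassenhaus theorem guarantees there are only finitely many such classes. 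A direct computation shows that $g_f\Lambda$ is again $\mathcal{A}$-stable, since $\mathcal{A}\,g_f\Lambda=g_f\,\phi_f^{-1}(\mathcal{A})\Lambda=g_f\mathcal{A}\Lambda=g_f\Lambda$, and that the class $[g_f\Lambda]$ does not depend on the chosen lift. Hence $\mcg(\Sigma)$ acts on the finite set $\mathcal{L}$ of homothety classes of $\mathcal{A}$-lattices, and the orbit of $[\Lambda_0]$ is finite.

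It remains to produce a fixed class, which is the crux. Choosing representatives $\Lambda_1,\dots,\Lambda_k$ of the orbit and forming $\Lambda=\sum_i\Lambda_i$ would yield an invariant lattice were it not for the homothety ambiguity in each $g_f\Lambda_i$: writing $g_f\Lambda_i=\mu_i(f)\Lambda_{\sigma_f(i)}$ with $\mu_i(f)\in K^\times$ and $\sigma_f$ the induced permutation, the sum $g_f\Lambda$ is homothetic to $\Lambda$ exactly when the $\mu_i(f)$ can be chosen independent of $i$. I would organise the obstruction through the locally free class group: on the subset of $\mathcal{L}$ of classes locally isomorphic to $[\Lambda_0]$ — a torsor under the finite abelian group $\mathrm{Cl}(\mathcal{A})$ — the assignment $f\mapsto [g_f\Lambda_0]-[\Lambda_0]$ defines a map $\delta\colon\mcg(\Sigma)\to\mathrm{Cl}(\mathcal{A})$. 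Because the $\phi_f$ are inner automorphisms of $A$, they act trivially on the abelian group $\mathrm{Cl}(\mathcal{A})$, so $\delta$ is a genuine group homomorphism into a finite abelian group.

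This is exactly where the hypothesis $g\ge 3$ enters: the group $\mcg(\Sigma)$ is then perfect, i.e. its abelianisation vanishes (see \cite{fm}), so every homomorphism to an abelian group, in particular $\delta$, is trivial. Consequently $[g_f\Lambda_0]=[\Lambda_0]$ for all $f$, the homothety class of $\Lambda_0$ is fixed by $\mcg(\Sigma)$, and any representative — for instance $\Lambda_0$ itself — is projectively preserved, as claimed. The main obstacle is precisely the passage from ``$\mcg(\Sigma)$ permutes lattice classes'' to ``$\mcg(\Sigma)$ fixes a lattice class'': finiteness of $\mathcal{L}$ reduces it to the vanishing of an abelian-valued invariant, and perfectness supplies that vanishing. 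One technical point to verify carefully is the local freeness of $\Lambda_0$ over $\mathcal{A}$ at the primes dividing the conductor; should it fail, I would first replace $\mathcal{A}$ by a $\mcg(\Sigma)$-invariant over-order (such as the intersection of the finitely many maximal orders containing $\mathcal{A}$ that arise in the orbit) before running the class-group argument.
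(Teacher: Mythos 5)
Your strategy is genuinely different from the paper's, and it contains a real gap at its central step. The paper proves the statement by lifting $\rho$ to the universal central extension $\mcgu(\Sigma)$ (this is where $g\ge 3$ and perfectness enter), arranging $\det=1$, and then using the fact that $\coker\Phi_\o$ is a finite torsion module to get $\pi_i^{\alpha_i}M_n(\O_i)\subset\Phi_i(\Sred_{\O_i}(\Sigma))$; a Smith-normal-form argument at each bad prime, combined with $\det\tilde{\rho}(f)=1$, then gives the uniform bound $\tilde{\rho}(f)\in\frac{1}{D}M_n(\O)$, so that the $\O$-module generated by all $\tilde{\rho}(f)\Sred_\o(H)$ is squeezed between $\Sred_\o(H)$ and $\frac{1}{D}\Sred_\o(H)$ and is the desired lattice. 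You instead try to avoid any explicit denominator bound by combining Jordan--Zassenhaus with a class-group obstruction killed by perfectness. The finiteness of the set of homothety classes of $\mathcal{A}$-lattices in $V$ is fine (your argument via $\en_A(V)=K$ is correct), and so is the verification that $g_f\Lambda$ is again an $\mathcal{A}$-lattice.

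The gap is that your obstruction map $\delta$ is not known to take values in a single finite abelian group. First, $\Lambda_0=\Sred_\o(H)$ cannot be locally free over $\mathcal{A}=\Sred_\o(\Sigma)$ for rank reasons ($\mathrm{rk}_\O\Lambda_0=n$ while $\mathrm{rk}_\O\mathcal{A}=n^2$), so ``a torsor under $\mathrm{Cl}(\mathcal{A})$'' is wrong as stated; the correct statement (Fr\"ohlich--Jacobinski) is that the isomorphism classes \emph{within the genus} of $\Lambda_0$ form a torsor under a ray class group of $\en_A(V)=K$, which would still be finite abelian and would still commute with the $g_f$-action since that group acts by central scalars. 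But second, and more seriously, you never show that $g_f\Lambda_0$ lies in the genus of $\Lambda_0$. One has $g_f\Lambda_{0,P}\cong\Lambda_{0,P}$ automatically at every prime where $\mathcal{A}_P$ is maximal (all $\mathcal{A}_P$-lattices in $V_P$ are then homothetic), but at the finitely many primes dividing the conductor --- exactly the primes $P_i$ where the paper has to work --- the twist of $\Lambda_{0,P}$ by $\phi_f$ need not be isomorphic to $\Lambda_{0,P}$. Without genus-preservation, $\mcg(\Sigma)$ merely permutes finitely many genera, $\delta$ is not a homomorphism into an abelian group, and perfectness gives nothing (a perfect group can certainly act on a finite set without fixed points). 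Your proposed repair of passing to maximal orders does not obviously help, because $\phi_f$ need not preserve any particular maximal order containing $\mathcal{A}$, and the intersection of all of them is again a non-maximal order with the same difficulty. To close the gap you would need to control $\Lambda_0$ locally at the conductor primes, which is essentially the explicit computation the paper performs with $\coker\Phi_\o$ and the invariant factors theorem.
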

\begin{proof}
We suppose that $g\ge 3$ to simplify the proof but one can adapt it so that it works for all $g\ge 1$. Recall from \cite{fm} that $\mcg(\Sigma)$ is a perfect group as $g\ge 3$ and denote by $\mcgu(\Sigma)$ its universal central extension (by $\Z$). 
By the universal property, there is a lift $\tilde{\rho}:\mcgu(\Sigma)\to \GL(\Sred(H))$. Composing with the determinant map and observing that $\mcgu(\Sigma)$ is also perfect, we can replace $\rho$ with the following representation:
$$\tilde{\rho}:\mcgu(\Sigma)\to \SL(\Sred(H)).$$

The cokernel of the map $\Phi_\o$ is a finitely generated torsion $\O$-module. Hence one can write $\coker \Phi_\o=\bigoplus_{i=1}^m R/P_i^{\alpha_i}$ where $P_1,\ldots,P_m$ are prime ideals in $\O$ and the $\alpha_1,\ldots,\alpha_m$ are positive integers. 
Fix $i\in \{1,\ldots, m\}$, and denote by $K_i$ the $P_i$-adic completion of $K$ and $\O_i$ its valuation ring. We also denote by $v_i:K_i^\times\to \Z$ the valuation and chose a uniformizer $\pi_i\in \O$ such that $v_i(\pi_i)=1$. 
Fixing a basis of $\Sred_\o(H)$, we may view $\tilde{\rho}$ as a representation in $\SL_n(K_i)$. If we set $\Sred_{\O_i}(\Sigma)=\Sred_\o(\Sigma)\otimes \O_i$ and denote by $\Phi_i:\Sred_{\O_i}(\Sigma)\to M_n(\O_i)$ the natural map, our assumptions imply that 
$$\pi_i^{\alpha_i}M_n(\O_i)\subset \Phi_i(\Sred_{\O_i}(\Sigma))\subset M_n(\O_i).$$

As $\tilde{\rho}$ preserves $\Phi_i(\Sred_{\O_i}(\Sigma))$ by conjugation, we get 

\begin{equation}\label{conj}
\tilde{\rho}(f) M_n(\O_i) \tilde{\rho}(f)^{-1}\subset \frac{1}{\pi_i^{\alpha_i}}M_n(\O_i)\quad\text{ for any }f\in\mcgu(\Sigma).
\end{equation}
As $\O_i$ is a principal ideal domain, the invariant factors theorem implies that one can write $\tilde{\rho}(f)=K_1AK_2$ where $K_1,K_2\in \SL_n(\O_i)$ and $A$ is a diagonal matrix with entries $a_1,\ldots,a_n\in K_i$. As $M_n(\O_i)$ is invariant by conjugation by $K_1$ and $K_2$, we get the same equation as \eqref{conj} if we substitute $\tilde{\rho}(f)$ with $A$. 

This equation implies $v_i(a_ra_s^{-1})\ge -\alpha_i$ for all $r,s\in \{1,\ldots,n\}$. Hence we have $|v_i(a_r)-v_i(a_s)|\le \alpha_i$ for all $r,s$ and $\sum v_i(a_r)=0$ because $\det A=1$. This shows $v_i(a_r)\ge \frac{1-n}{n}\alpha_i\ge -\alpha_i$. Recalling that $\tilde{\rho}(f)=K_1AK_2$, we get finally $\tilde{\rho}(f)\in \frac{1}{\pi_i^{\alpha_i}}M_n(\O_i)$. Writing $D=\prod_{i=1}^m \pi_i^{\alpha_i}\in \O$, we get $\tilde{\rho}(f)\in \frac{1}{D}M_n(\O)$. 

Consider $\Lambda$ the $\O$-submodule of $\Sred(H)$ generated by $\tilde{\rho}(f)\Sred_\o(H)$ for $f\in\mcgu(\Sigma)$. We found $\Sred_\o(H)\subset \Lambda\subset \frac{1}{D}\Sred_\o(H)$ hence $\Lambda$ is a lattice, obviously preserved by $\tilde{\rho}$.
\end{proof}
\begin{remark} One can get information on the denominator $D$ provided by the proof by the following observation. Take $P$ a prime ideal in $\O$ with residue field $k$. Tensoring the integral reduced skein module with $k$ gives yet a new version of the theory, in particular a map $\Phi_k:\Sred_k(\Sigma)\to \en \Sred_k(H)$. The proof that $\Phi_k$ is an isomorphism works provided that the $\Pi$-matrix of Proposition \ref{linking} is invertible over $k$. Hence $D$ can be chosen to be a power of $-2r/(A^2-A^{-2})^2$. This simplifies further once we notice that $r=(A^2-A^{-2})^{r-1}$ modulo a unit of $\O$. 
\end{remark}
One can find a lattice which is free as a $\O$-module and find an explicit basis, see \cite{gm}. This integral structure gives the most interesting applications of the quantum representations, see \cite{gm2, mr,ks}.

\subsection{Marked points}
The theory extends to the case where the surface is endowed with colored marked points. A collection of marked points on a surface $\Sigma$ indexed by $I$ is a family of embeddings $\phi_i:[0,1]^2\to \Sigma$  for $i\in I$ with disjoint images. 
The colors of the marked points are given by a map $c:I\to \{0,\ldots,r-2\}$ which correspond the the system of arcs 
$$P=\bigcup_{i\in I} \phi_i(P_{c(i)}).$$

We define the element $f_{c}\in \Sred(\Sigma,P)$ by the formula $f_c=\bigotimes_{i\in I} \phi_i(f_{c(i)})$. This is an idempotent in the algebra $\Sred(\Sigma,P)$ where the product is given as usual by stacking. We then define the subalgebra $$\Sred(\Sigma,c)=f_c \Sred(\Sigma,P)f_c\subset\Sred(\Sigma,P).$$

Given a handlebody $H$ with boundary $\Sigma$, this algebra acts on $\Sred(H,c)=\im (\Phi(f_c))\subset \Sred(H,P)$ and we can show the following proposition. 

\begin{proposition} The natural action of $\Sred(\Sigma,c)$ on $\Sred(H,c)$ induces an isomorphism $\Sred(\Sigma,c)\simeq \en \Sred(H,c)$. In particular, the mapping class group $\mcg(\Sigma,P)$ which is the identity on $\phi_i([0,1]^2)$ acts on $\Sred(\Sigma,c)$ by automorphisms, which produces a projective representation $$\rho_c:\mcg(\Sigma,P)\to \PGL( \Sred(H,c)).$$
\end{proposition}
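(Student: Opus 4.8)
The plan is to reduce everything to the marked-point analogue of the main theorem and then pass to the corner algebra cut out by the idempotent $f_c$. Throughout write $A=\Sred(\Sigma,P)$ and $W=\Sred(H,P)$, so that $f_c\in A$ is idempotent, $\Sred(\Sigma,c)=f_cAf_c$, and $\Sred(H,c)=\im\Phi(f_c)=\Phi(f_c)W$ (the last equality holds because $\Phi(f_c)$ is an idempotent endomorphism, whose image coincides with its fixed space $\Phi(f_c)W$).

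First I would establish that the curve operator map $\Phi\colon A\to\en(W)$ is an isomorphism of algebras, by repeating the proof of the main theorem with the fixed system of arcs $P$ carried along at every stage. Concretely, one chooses the separating disc $D\subset\Sigma$ disjoint from the marked squares $\phi_i([0,1]^2)$, keeps $P$ as inert boundary data in all the gluings, and re-runs the construction of the standard basis together with the linking form. This requires marked-point versions of Propositions~\ref{sesqui} and~\ref{linking}, obtained by allowing the trivalent banded graph $\Gamma$ to carry colored univalent legs ending on $P$ (with colors prescribed by $c$) in addition to its internal edges; the orthogonality and non-degeneracy computations are unchanged, since they are local around edges and $\Theta$-vertices. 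This verification is the main obstacle: nothing is conceptually new, but one must genuinely re-derive the basis $(\Gamma_c)$ and the non-degeneracy of $\langle\cdot,\cdot\rangle_{H,H'}$ in the presence of the fixed colored arcs.

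Granting this, the proposition follows by pure algebra. Since $\Phi$ is an algebra isomorphism and $f_c$ is idempotent, $\Phi(f_c)$ is an idempotent of $\en(W)$ and $\Phi$ restricts to an isomorphism $f_cAf_c\xrightarrow{\ \sim\ }\Phi(f_c)\,\en(W)\,\Phi(f_c)$. For any idempotent $e\in\en(W)$ the restriction-to-$eW$ map gives an algebra isomorphism $e\,\en(W)\,e\xrightarrow{\ \sim\ }\en(eW)$: an element $e\varphi e$ maps $W$ into $eW$, vanishes on $\ker e$, and restricts on $eW$ to an arbitrary endomorphism, the unit $e$ going to $\id_{eW}$. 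Applying this with $e=\Phi(f_c)$ and $eW=\Sred(H,c)$ yields $\Sred(\Sigma,c)=f_cAf_c\cong\en(\Sred(H,c))$, as claimed.

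Finally, for the representation: an element of $\mcg(\Sigma,P)$ is the identity on each square $\phi_i([0,1]^2)$, hence fixes the arcs $P$ and the idempotent $f_c$, so it acts on $A$ by an algebra automorphism commuting with $f_c$ and therefore preserves the corner $\Sred(\Sigma,c)=f_cAf_c$, acting there again by automorphisms. The isomorphism just obtained identifies this corner with the central simple $K$-algebra $\en(\Sred(H,c))$, so by the Skolem-Noether theorem each such automorphism is conjugation by an element of $\GL(\Sred(H,c))$, unique up to a scalar. This produces the projective representation $\rho_c\colon\mcg(\Sigma,P)\to\PGL(\Sred(H,c))$, exactly as in the unmarked case treated in the introduction.
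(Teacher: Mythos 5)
The paper does not actually supply a proof of this proposition (it is merely asserted in the ``Further topics'' section), so I can only judge your argument on its own merits. Your reduction in the second half is correct and clean: granting that $\Phi\colon\Sred(\Sigma,P)\to\en(W)$ is an isomorphism with $W=\Sred(H,P)$, the passage to the corner via $f_cAf_c\cong \Phi(f_c)\en(W)\Phi(f_c)\cong\en(\Phi(f_c)W)$ is airtight, and the observation that $\mcg(\Sigma,P)$ fixes $f_c$ and hence preserves the corner, followed by Skolem--Noether, correctly produces $\rho_c$.

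The gap is in your step 1, precisely at the point you flag as ``the main obstacle.'' The generating family you propose for $W=\Sred(H,P)$ --- trivalent graphs with univalent legs ending on $P$ \emph{with colors prescribed by $c$}, i.e.\ carrying $f_{c(i)}$ at the marked point $i$ --- spans only $\im\Phi(f_c)=\Sred(H,c)$, not all of $\Sred(H,P)$. (Already for $\Sigma=T^2$ with one marked point carrying two arcs, $\Sred(H,P)$ has dimension $2r-4$ while the $f_2$-colored graphs span only an $(r-3)$-dimensional subspace.) A basis of $\Sred(H,P)$ must allow the $c(i)$ strands at the marked point $i$ to be fused through every admissible $j\le c(i)$ with $j\equiv c(i)\bmod 2$, together with a choice of element in the multiplicity space $\homo(1^{\otimes c(i)},j)$; the marked analogues of Propositions \ref{sesqui} and \ref{linking} must then handle the pairings on these multiplicity spaces, which is genuinely new content and not ``unchanged'' or purely local around $\Theta$-vertices. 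So as written, your intermediate claim $\Sred(\Sigma,P)\cong\en(\Sred(H,P))$ (which I believe is true) is not established by the basis you describe. The most economical repair is to drop this detour entirely and run the main theorem's argument directly for the corner: establish that the $(\Gamma_d)$, for $d$ an $r$-admissible coloring restricting to $c$ on the legs, form an orthogonal basis of $\Sred(H,c)$ (exactly the basis the paper describes right after the proposition), prove non-degeneracy of the linking form restricted to $\Sred(H,c)\times\Sred(H',c)$, and repeat the cut-along-a-disc argument for $f_c\Sred(\Sigma,P)f_c$ acting on $\Sred(H,c)$ --- this only ever uses the $c$-colored graphs you already have, and avoids the structure of the full algebra $\Sred(\Sigma,P)$ altogether.
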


Moreover, one can show that the reduced colored skein module $\Sred(H,c)$ has a basis given by skein expansions of graphs $\Gamma$ embedded in $H$ with univalent vertices lying on the marked points. The $r$-admissible colorings of these graphs are colorings which coincide with $c$ on the boundary points. 

For instance, in the case of the once-punctured torus, one can consider the banded graph shown in Figure \ref{graph}. If one gives the color $c$ to the marked point, the color $a$ of the remaining edge should satisfy $c\le 2a \le 2r-4-c$. Hence, most of what we showed in these notes generalise to the case of marked points.

\end{document}